\documentclass[11pt]{amsart}
\usepackage{graphicx,amssymb,amsmath,amsthm}
\usepackage{enumerate}
\usepackage{dsfont}
\usepackage[colorlinks, citecolor=red]{hyperref}
\usepackage{mathrsfs}
\usepackage{epsfig}
\usepackage{lscape}
\usepackage{subfigure}
\usepackage{epstopdf}

\usepackage{caption}
\usepackage{algorithm}
\usepackage{algpseudocode}

\usepackage{geometry}
\usepackage{amsfonts}
\usepackage{graphicx,cite}

\usepackage{paralist}
\usepackage{enumerate}
\usepackage{url}

\textheight 8.0in
\textwidth 6.00in
\topmargin -0.25in
\oddsidemargin 0.25in
\evensidemargin 0.25in
\parskip 1.0ex

\newcommand{\argmax}[1]{\mathop{\rm argmax}\limits_{#1}}

\newcommand{\cS}{{\mathcal S}}

\newcommand{\diag}{{\rm diag}}

\newtheorem{definition}{Definition}[section]
\newtheorem{corollary}[definition]{Corollary}

\newtheorem{theorem}[definition]{Theorem}
\newtheorem{lemma}[definition]{Lemma}

\newtheorem{remark}[definition]{Remark}

\newtheorem{example}[definition]{Example}

\date{}

\begin{document}
\baselineskip 18pt
\bibliographystyle{plain}
\title[Restricted invertibility with weighted columns]{A note on restricted invertibility with weighted columns}

\author{Jiaxin Xie}
\address{School of Mathematical Sciences, Beihang University, Beijing, 100191, China }
\email{xiejx@buaa.edu.cn}

\begin{abstract}
The restricted invertibility theorem was originally introduced by Bourgain and Tzafriri in $1987$ and has been considered as one of the most celebrated theorems in geometry and analysis.
In this note, we present weighted versions of this theorem with slightly better estimates.  Particularly, we show that for any $A\in\mathbb{R}^{n\times m}$ and $k,r\in\mathbb{N}$ with $k\leq r\leq \mbox{rank}(A)$, there exists a subset $\mathcal{S}$ of size $k$ such that $\sigma_{\min}(A_{\mathcal{S}}W_{\mathcal{S}})^2\geq \frac{(\sqrt{r}-\sqrt{k-1})^2}{\|W^{-1}\|_F^{2}}\cdot\frac{r}{\sum_{i=1}^{r}\sigma_{i}(A)^{-2}}$, where $W=\diag(w_1,\ldots,w_m)$ with $w_i$ being the weight of the $i$-th column of $A$. Our constructions are algorithmic and employ the interlacing families of polynomials developed by Marcus, Spielman, and Srivastava.
\end{abstract}
\maketitle

\section{Introduction}

\subsection{Restricted invertibility}

Given a matrix $A\in\mathbb{R}^{n\times m}$, the \emph{restricted invertibility problem} aims to find a subset $\cS\subseteq\{1,2,\ldots,m\}$, of cardinality $k\leq {\rm rank}(A)$ as large as possible, such that $$\|A_{\cS}x\|_2\geq c\|x\|_2$$ holds for all $x\in\mathbb{R}^{|\cS|}$ and to estimate the constant $c$.
Here we use  $A_{\cS}$ to denote the sub-matrix of $A$ obtained by extracting the columns of $A$ indexed by $\cS$, $\|\cdot\|_2$ denotes the Euclidean $2$-norm and $|\cS|$ denotes the cardinality of the set $\cS$.

In \cite{bourgain1987}, Bourgain and Tzafriri  provided the seminal result, known as the \emph{Bourgain-Tzafriri Restricted Invertibility Theorem}, to address the above problem. Their result has major influences on subsequent research, especially in Banach space theory and harmonic analysis
and recently has also had significant applications on problems in random matrix theory, applied mathematics, RIP-type results in compressed sensing and computer science (see \cite{casazza12,casazza09,chen2017,Litvak2019,Mohan1,Mohan2,
spielman17,Naor2017,spielman2012an,wen2018,xx2019,youssef2014,youssef2014-2,vershynin2001} and their references). The purpose of this note is to establish a weighted version of the restricted invertibility theorem by using the ingenious method of interlacing families of polynomials developed by Marcus, Spielman, and Srivastava  \cite{spielman151,spielman152}.

\subsection{Our contribution}
In this note, we consider a \emph{weighted version} of the restricted invertibility and aim to select multiples of the columns of the matrix.
Let $w_1,\ldots,w_m$ be any choices of multiples and denote $W=\diag(w_1,\ldots,w_m)$. For any $k\leq \mbox{rank}(A)$, we want to find a subset $\cS\subseteq\{1,2,\ldots,m\}$ of cardinality $k$, such that $\|A_{\cS}W_{\cS}x\|_2\geq c\|x\|_2$  and the constant $c$ is as large as possible.
This notion of selecting with weighted columns is useful and has been raised in many data analysis applications, especially in dimensionality reduction and linear sketching \cite{avron2013,Boutsidis2014,boutsidis2013,boutsidis2013l2,BSS14,woodruff2014}.

For convenience, throughout this note, we assume that $w_i\neq 0$ for $i=1,\ldots,m$.
We use $\|A\|_2$ and $\|A\|_F$ to denote, respectively, the operator norm and Frobenius norm of $A$. We denote the least singular value of $A$ as
$\sigma_{\min}(A):=\min\limits_{\|x\|_2=1}\|Ax\|_2$. We use $A^{\dagger}$ to denote the Moore-Penrose pseudo-inverse of $A$.

\subsubsection{Estimation in terms of rank}
Our main result is the following theorem.

\begin{theorem}
\label{main-theorem2}
Suppose that $A\in\mathbb{R}^{n\times m}$ and denote all its nonzero singular values as $\sigma_1(A)\geq\ldots \geq\sigma_{\emph{rank}(A)}(A)>0$.  Let $W\in\mathbb{R}^{m\times m}$ be a diagonal matrix whose diagonal elements are nonzero. Then for any fixed $k,r\in\mathbb{N}$ satisfying $k\leq r\leq\emph{rank}(A)$, there exists a subset $\cS\subseteq\{1,\ldots,m\}$ of size $k$ such that
$$\sigma_{\min}(A_{\cS}W_{\cS})^2\geq \frac{(\sqrt{r}-\sqrt{k-1})^2}{\|W^{-1}\|_F^{2}}\cdot\frac{r}{\sum_{i=1}^{r}\sigma_{i}(A)^{-2}}.$$
\end{theorem}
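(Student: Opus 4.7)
The plan is to combine an SVD-based reduction to an isotropic system with the interlacing-polynomial method of Marcus, Spielman and Srivastava \cite{spielman151, spielman152}. First I would pass to the rank-$r$ truncation $A_r = U_r\Sigma_r V_r^\T$ of $A$ and set $\phi_i := V_r^\T e_i \in \R^r$. Since $U_r^\T U_r = I_r$ implies $\|U_r^\T y\|_2 \leq \|y\|_2$ for every $y \in \R^n$, for any $x \in \R^k$ with zero-extension $x' \in \R^m$,
\begin{equation*}
\|A_\cS W_\cS x\|_2^2 \;\geq\; \|U_r^\T A W x'\|_2^2 \;=\; \|\Sigma_r \Phi_\cS W_\cS x\|_2^2,
\end{equation*}
where $\Phi_\cS$ is the $r \times k$ matrix with columns $\{\phi_i\}_{i\in\cS}$. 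Since $\sum_{i=1}^m \phi_i\phi_i^\T = V_r^\T V_r = I_r$, the $\phi_i$ form an isotropic family in $\R^r$, and the theorem reduces to producing a subset $\cS$ of size $k$ with $\sigma_{\min}(\Sigma_r\Phi_\cS W_\cS)^2$ at least the target lower bound.

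Next I would set up an interlacing family. The natural randomization includes each index $i$ in $\cS$ with probability $p_i = k/(w_i^2 \|W^{-1}\|_F^2)$, chosen precisely so that
\begin{equation*}
\E_\cS\Bigl[\,\sum_{i\in\cS} w_i^2\,\phi_i\phi_i^\T\Bigr] \;=\; \frac{k}{\|W^{-1}\|_F^2}\,I_r.
\end{equation*}
Form the random characteristic polynomial $p_\cS(z) := \det\bigl(zI_r - \sum_{i\in\cS} w_i^2\,\Sigma_r\phi_i\phi_i^\T\Sigma_r\bigr)$. The interlacing-families machinery of \cite{spielman151, spielman152} provides a common interlacing polynomial for the collection $\{p_\cS\}$, so there exists a realization $\cS^\star$ whose $k$-th largest root bounds $\sigma_{\min}(\Sigma_r\Phi_{\cS^\star} W_{\cS^\star})^2$ from below by the $k$-th largest root of the expected polynomial $\E[p_\cS]$.

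Finally I would lower bound the $k$-th largest root of $\E[p_\cS]$. By Cauchy--Binet together with the isotropy identity, $\E[p_\cS]$ collapses to a univariate polynomial whose coefficients combine the elementary symmetric polynomials of the spectrum of $\Sigma_r^{2}$ with the normalized weights; the MSS barrier-function method then yields
\begin{equation*}
\lambda_k\bigl(\E[p_\cS]\bigr) \;\geq\; \frac{(\sqrt{r}-\sqrt{k-1})^2}{\|W^{-1}\|_F^2}\cdot\frac{r}{\sum_{i=1}^r\sigma_i(A)^{-2}},
\end{equation*}
where the combinatorial factor $(\sqrt{r}-\sqrt{k-1})^2$ comes from the barrier sweep for isotropic vectors in $\R^r$, the $1/\|W^{-1}\|_F^2$ from the sampling probabilities, and the harmonic mean $r/\sum_i\sigma_i(A)^{-2}$ from the $\Sigma_r$-weighting inside the determinant. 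The main obstacle will be justifying this clean three-way product structure: since $\Sigma_r$ sits symmetrically around the rank-one outer products, it becomes entangled with the column weights $w_i^2$ under Cauchy--Binet, so the barrier argument must be organized (likely through a rescaling $z \mapsto z\cdot(\sum_i\sigma_i(A)^{-2})/r$ that isotropizes the weighting) to extract the three factors exactly rather than in a cruder coupled form. Retaining the sharp constant $\sqrt{k-1}$ in place of $\sqrt{k}$ requires the full interlacing-families argument, not merely the barrier estimate of \cite{spielman2012an}.
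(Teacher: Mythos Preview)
Your proposal has a genuine gap in the randomization step. You sample each index $i$ independently with Bernoulli probability $p_i = k/(w_i^2\|W^{-1}\|_F^2)$, but nothing prevents $p_i>1$ (take any $i$ with $w_i^{-2} > \|W^{-1}\|_F^2/k$), so the scheme is not well-defined in general. Even when all $p_i\le 1$, the resulting $|\cS|$ is random: for realizations with $|\cS|<k$ the $k$-th largest root of $p_\cS$ is $0$, and for $|\cS|>k$ the $k$-th root is not $\sigma_{\min}(\Sigma_r\Phi_\cS W_\cS)^2$. The interlacing-families conclusion ``some leaf has $k$-th root at least $\lambda_k(\E[p_\cS])$'' therefore does not hand you a subset of size exactly $k$ with the claimed smallest singular value. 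The ``entanglement'' you flag between $\Sigma_r$ and the weights $w_i^2$ under Cauchy--Binet is a direct symptom of this Bernoulli setup: because the $m$ rank-one terms enter the expected determinant individually, no cancellation occurs and the expected polynomial does not collapse to the clean product you need.

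The paper avoids all of this by a different randomization: draw $k$ i.i.d.\ columns \emph{with replacement}, each equal to $w_i a_i$ with probability $p_i = w_i^{-2}/\|W^{-1}\|_F^2$. Then $\E[w_s^2 a_s a_s^\T] = AA^\T/\|W^{-1}\|_F^2$ exactly---the sampling weights cancel the column weights---and repeated application of the identity $\E\det[B-\mathbf{rr}^\T]=(1-\partial_t)\det[B+t\,\E\mathbf{rr}^\T]\big|_{t=0}$ gives the closed form
\[
f_\emptyset(x)\;=\;x^{n-k}\prod_{i=1}^{\mathrm{rank}(A)}\Bigl(1-\tfrac{\sigma_i(A)^2}{\|W^{-1}\|_F^2}\partial_x\Bigr)x^k.
\]
No SVD truncation is needed: the parameter $r$ enters only at the barrier step, where one simply drops the (positive) terms $i>r$ from $\sum_{i}\bigl(\|W^{-1}\|_F^2\sigma_i(A)^{-2}+\alpha\bigr)^{-1}$ and applies Jensen to the convex map $t\mapsto(\alpha+t)^{-1}$ to obtain the harmonic mean $r/\sum_{i\le r}\sigma_i(A)^{-2}$; optimizing over $\alpha$ then yields $(\sqrt{r}-\sqrt{k-1})^2$. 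Distinctness of the $k$ sampled indices is recovered a posteriori from the positivity of the bound, which forces $\sum_{i=1}^k w_{s_i}^2 a_{s_i}a_{s_i}^\T$ to have rank $k$. Your SVD reduction $\|A_\cS W_\cS x\|_2\ge\|\Sigma_r\Phi_\cS W_\cS x\|_2$ is correct and would also work, but only if you replace the Bernoulli sampling by this i.i.d.\ scheme; with that fix the expected polynomial becomes $\prod_{i\le r}(1-\sigma_i^2\|W^{-1}\|_F^{-2}\partial_x)x^k$ and the entanglement vanishes.
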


Note that our estimate is in terms of rank and a bound holds even when $k=\mbox{rank}(A)$. The proof of Theorem \ref{main-theorem2} provides a deterministic algorithm for computing the subset $\cS$ in $O\big(k(m-\frac{k}{2})n^{\theta+1}\big)$, where $\theta\in (2,2.373)$ is the matrix multiplication complexity exponent. We will introduce it in Section $4$.

Taking $W=I$ in Theorem \ref{main-theorem2}, we obtain the following corollary.
\begin{corollary}
\label{main-corollary}
Let $A$ be an $n\times m$ matrix. Then for any fixed $k,r\in\mathbb{N}$ satisfying $k\leq r\leq\emph{rank}(A)$, there exists a subset $\cS\subseteq\{1,\ldots,m\}$ of size $k$ such that
$$\sigma_{\min}(A_{\cS})^2\geq \frac{(\sqrt{r}-\sqrt{k-1})^2}{m}\cdot\frac{r}{\sum_{i=1}^r\sigma_{i}(A)^{-2}}.$$
Particularly, if $r=\emph{rank}(A)$, then
$$
\sigma_{\min}(A_{\cS})^2\geq \frac{(\sqrt{\emph{rank}(A)}-\sqrt{k-1})^2}{m}\cdot\frac{\emph{rank}(A)}{\|A^{\dagger}\|^2_F}.
$$
\end{corollary}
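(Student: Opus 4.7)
The plan is to obtain Corollary \ref{main-corollary} as an immediate specialization of Theorem \ref{main-theorem2}: first I set $W=I_m$ to reduce the weighted bound to the unweighted one with a $1/m$ factor, and then, for the ``particularly'' case, I rewrite the singular-value sum $\sum_{i=1}^{\mathrm{rank}(A)}\sigma_i(A)^{-2}$ as $\|A^{\dagger}\|_F^{2}$ using the standard spectral description of the Moore--Penrose pseudoinverse.

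\emph{Step 1: specializing to $W=I_m$.} In Theorem \ref{main-theorem2} I choose $W$ to be the $m\times m$ identity matrix $I_m$, whose diagonal entries are all $1\neq 0$, so the hypothesis on $W$ is satisfied. Then $W_{\cS}$ is the $|\cS|\times|\cS|$ identity, giving $A_{\cS}W_{\cS}=A_{\cS}$; moreover $W^{-1}=I_m$, so
\[
\|W^{-1}\|_F^{2}=\|I_m\|_F^{2}=m.
\]
Substituting these two facts into the conclusion of Theorem \ref{main-theorem2} replaces $(\sqrt{r}-\sqrt{k-1})^2/\|W^{-1}\|_F^{2}$ with $(\sqrt{r}-\sqrt{k-1})^2/m$ and leaves the factor $r/\sum_{i=1}^{r}\sigma_i(A)^{-2}$ untouched, which is exactly the first bound asserted in the corollary.

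\emph{Step 2: the case $r=\mathrm{rank}(A)$.} Let $A=U\Sigma V^{\T}$ be a (thin) singular value decomposition with nonzero singular values $\sigma_1(A)\geq\cdots\geq\sigma_{\mathrm{rank}(A)}(A)>0$. The Moore--Penrose pseudoinverse is $A^{\dagger}=V\Sigma^{\dagger}U^{\T}$, where $\Sigma^{\dagger}$ is diagonal with entries $1/\sigma_i(A)$ for $i\leq\mathrm{rank}(A)$ and $0$ otherwise. Since the Frobenius norm is unitarily invariant,
\[
\|A^{\dagger}\|_F^{2}=\|\Sigma^{\dagger}\|_F^{2}=\sum_{i=1}^{\mathrm{rank}(A)}\sigma_i(A)^{-2}.
\]
Setting $r=\mathrm{rank}(A)$ in the first part of the corollary and substituting this identity for the denominator produces the claimed bound
\[
\sigma_{\min}(A_{\cS})^{2}\geq\frac{(\sqrt{\mathrm{rank}(A)}-\sqrt{k-1})^{2}}{m}\cdot\frac{\mathrm{rank}(A)}{\|A^{\dagger}\|_F^{2}}.
\]
I do not expect any genuine obstacle: the full content of Corollary \ref{main-corollary} is already carried by Theorem \ref{main-theorem2}, and the derivation consists only of the two routine identifications $\|I_m\|_F^{2}=m$ and $\|A^{\dagger}\|_F^{2}=\sum_i\sigma_i(A)^{-2}$.
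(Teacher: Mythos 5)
Your proposal is correct and matches the paper's own (implicit) derivation exactly: the paper simply states that the corollary follows by taking $W=I$ in Theorem \ref{main-theorem2}, which gives $\|W^{-1}\|_F^2=m$, and the second display is the specialization $r=\mathrm{rank}(A)$ together with the standard identity $\|A^{\dagger}\|_F^2=\sum_{i=1}^{\mathrm{rank}(A)}\sigma_i(A)^{-2}$. Nothing further is needed.
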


\begin{remark}
We next explain the reason why we employ an extra parameter $r$ in Theorem \ref{main-theorem2}, instead of taking $r=rank(A)$ directly. Generally, the parameter $r$ can be used as a thresholding rule for eliminating the smaller singular values of $A$. For example, consider a matrix $A\in\mathbb{R}^{n\times n}$ with $\sigma_i(A)=O(1)$ for $i=1,\ldots,r$ and $\sigma_i(A)=O(1/n^{1.5})$ for $i=r+1,\ldots,n$. Then $rank(A)=n$ and  for $k=O(r)$, Theorem \ref{main-theorem2} yields a subset $\cS$ of size $k$ for which $\sigma_{\min}(A_{\cS} W_{\cS})^2\geq O\big(\frac{n}{\|W^{-1}\|_F^2}\big)$, while by setting $r:=rank(A)$ Theorem \ref{main-theorem2} yields such a subset with $\sigma_{\min}(A_{\cS} W_{\cS})^2\geq O\big(\frac{1}{\|W^{-1}\|_F^2n}\big)$.
\end{remark}

\subsubsection{Estimation in terms of stable rank}
The estimation of the restricted invertibility principle is often stated in terms of stable rank in the literature \cite{bourgain1987,Mohan1,Mohan2,spielman17,youssef2014,youssef2014-2,vershynin2001,spielman2012an}. Here, we also present an estimation in terms of stable rank.
Define the Schatten $4$-norm stable rank as
 $$\mbox{srank}_4(A):=\frac{\big(\sum_{i}\sigma_i(A)^2\big)^2}{\sum_{i}\sigma_i(A)^4}.$$
Our result can be stated as follows.
\begin{theorem}
\label{main-theorem}
Let $A$ be an $n\times m$ matrix and $0<\epsilon<1$. Suppose $W\in\mathbb{R}^{m\times m}$ is a diagonal matrix whose diagonal elements are nonzero. Then there exists a subset $\cS\subseteq\{1,\ldots,m\}$ of size
$$
|\cS|=\big\lfloor(1-\epsilon)^2\emph{srank}_4(A)\big\rfloor+1
$$
such that
$$\sigma_{\min}(A_{\cS}W_{\cS})\geq \epsilon\frac{\|A\|_F}{\|W^{-1}\|_F}.$$
\end{theorem}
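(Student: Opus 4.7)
The plan is to follow the same interlacing-families-of-polynomials template that underlies Theorem~\ref{main-theorem2}, but to replace the rank-based final estimate with one tuned to the Schatten-$4$ norm stable rank.

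I would set $v_i = w_i a_i$ for each $i$, so that $\sigma_{\min}(A_{\cS} W_{\cS})^2$ is the smallest eigenvalue of the Gram matrix $\sum_{i \in \cS} v_i v_i^{\T}$. For a uniformly random subset $\cS \subseteq \{1,\ldots,m\}$ of the prescribed size $k = \lfloor (1-\epsilon)^2 \mbox{srank}_4(A)\rfloor+1$, form the characteristic polynomial $q_{\cS}(x) = \det(xI_k - V_{\cS}^{\T} V_{\cS})$. By the Marcus--Spielman--Srivastava interlacing families theorem, it suffices to lower-bound the smallest root of the expected polynomial $p(x) := \E_{\cS}[q_{\cS}(x)]$. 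Moreover, $p(x)$ admits a mixed-characteristic-polynomial expression in the rank-one matrices $w_i^2 a_i a_i^{\T}$, obtained by exactly the same derivation used in the proof of Theorem~\ref{main-theorem2}, so this step is essentially bookkeeping.

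To bound the smallest root of $p(x)$ from below, I plan to apply the multivariate barrier method of Batson--Spielman--Srivastava, but with the upper-barrier shift scaled in terms of $\|A\|_F^{2}/\|W^{-1}\|_F^{2}$ rather than $r/\sum_{i=1}^{r}\sigma_i(A)^{-2}$. Since $\mbox{srank}_4(A) = \|A\|_F^{4}/\sum_{i}\sigma_i(A)^{4}$, the plan is to arrange the shift so that each of the $k$ rank-one updates consumes at most a $1/\mbox{srank}_4(A)$ fraction of the initial barrier budget. After at most $(1-\epsilon)^2\,\mbox{srank}_4(A)+1$ updates the budget is depleted by at most $(1-\epsilon)^2$, leaving an $\epsilon^2$ margin; combined with the interlacing families reduction this yields the claimed bound $\sigma_{\min}(A_{\cS} W_{\cS})^2 \geq \epsilon^2\|A\|_F^{2}/\|W^{-1}\|_F^{2}$.

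The main obstacle is verifying that the per-step decrement of the barrier is actually controlled by a Schatten-$4$ quantity rather than by the rank. Concretely, the dominant term in the potential drop at each step takes the form $\sum_i w_i^{-2}\langle a_i,\,(AA^{\T} - zI)^{-2} a_i\rangle$; matching this sum against the identity $\sum_i a_i a_i^{\T} = AA^{\T}$ via Cauchy--Schwarz should produce a bound of order $\|W^{-1}\|_F^{2}\cdot\tr\bigl((AA^{\T}-zI)^{-2}(AA^{\T})^{2}\bigr)$, in which $\sum_i\sigma_i(A)^{4} = \tr((AA^{\T})^{2})$ appears naturally and the weight budget $\|W^{-1}\|_F^{2}$ enters. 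Carrying out this barrier recursion cleanly, with the correct alignment of the shift parameter to $\|A\|_F^{2}/\|W^{-1}\|_F^{2}$, is the technical core of the argument.
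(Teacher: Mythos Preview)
Your high-level plan is right: the proof reuses the interlacing-families setup of Theorem~\ref{main-theorem2} verbatim, so the only new content is the lower bound on $\lambda_{\min}(g)$ where $g(x)=\prod_{i=1}^{\rank(A)}\bigl(1-\sigma_i(A)^2\|W^{-1}\|_F^{-2}\partial_x\bigr)x^k$. Two points in your outline need correction, and the second is a real gap. First, the sampling is not uniform: to obtain $\mathbb{E}[w_{s}^2 a_s a_s^{\T}]=AA^{\T}/\|W^{-1}\|_F^2$ (which is what makes $f_\emptyset$ depend only on the singular values of $A$) one must sample column $i$ with probability $w_i^{-2}/\|W^{-1}\|_F^2$; uniform sampling of the $v_i=w_ia_i$ gives $\mathbb{E}[v_sv_s^{\T}]=m^{-1}AW^2A^{\T}$ and the formula of Lemma~\ref{lemma-fix} is lost.

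Second, the barrier mechanism you describe is not the one that produces $\mbox{srank}_4(A)$, and your Cauchy--Schwarz step does not give what you claim: $\sum_i w_i^{-2}\langle a_i,Ma_i\rangle=\tr(M\,AW^{-2}A^{\T})$, and there is no general inequality between $AW^{-2}A^{\T}$ and $(AA^{\T})^2$. The paper does not track $k$ per-column updates at all; it applies Lemma~\ref{lemma-lbf} once for each of the $\rank(A)$ operators $(1-t_i\partial_x)$, arriving exactly as in Theorem~\ref{main-theorem2} at
\[
\lambda_{\min}(g)\ \ge\ -\frac{k-1}{\alpha}+\sum_{i=1}^{\rank(A)}\frac{1}{\|W^{-1}\|_F^2\,\sigma_i(A)^{-2}+\alpha}.
\]
The entire difference from Theorem~\ref{main-theorem2} is how this sum is lower-bounded. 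One rewrites the $i$th summand as $\dfrac{\sigma_i(A)^2/\|A\|_F^2}{\|W^{-1}\|_F^2/\|A\|_F^2+\alpha\,\sigma_i(A)^2/\|A\|_F^2}$ and applies Jensen (Lemma~\ref{lemma-Jensen}) to the convex map $x\mapsto 1/(c+\alpha x)$ with probability weights $\mu_i=\sigma_i(A)^2/\|A\|_F^2$. Since $\sum_i\mu_i^2=1/\mbox{srank}_4(A)$, this yields
\[
\sum_i\frac{1}{\|W^{-1}\|_F^2\,\sigma_i(A)^{-2}+\alpha}\ \ge\ \frac{\mbox{srank}_4(A)}{\mbox{srank}_4(A)\,\|W^{-1}\|_F^2/\|A\|_F^2+\alpha},
\]
and optimising in $\alpha$ gives $\lambda_{\min}(g)\ge\bigl(1-\sqrt{(k-1)/\mbox{srank}_4(A)}\bigr)^{2}\|A\|_F^2/\|W^{-1}\|_F^2$. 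So the ``technical core'' is a one-line weighted Jensen inequality, not a multivariate barrier recursion; identifying the weights $\mu_i=\sigma_i(A)^2/\|A\|_F^2$ is the idea missing from your outline.
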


Let the columns of $A$ be the vectors $a_1,\ldots,a_m\in \mathbb{R}^n$, set $W=\diag(1/\|a_1\|_2,\ldots,1/\|a_m\|_2)$ in Theorem \ref{main-theorem}, we obtain the following ``normalized'' restricted invertibility principle:
\begin{corollary}
\label{main-theorem-col2}
Suppose $A$ is an $n\times m$ matrix and $k=(1-\epsilon)^2\emph{srank}_4(A)+1$ for some $\epsilon\in(0,1)$.  Then there exists a subset $\cS\subseteq\{1,\ldots,m\}$ of size $k$ such that
$$\sigma_{\min}(\tilde{A}_{\cS})\geq \epsilon,
$$
where $\tilde{A}$ denotes the matrix $A$ with normalized columns.
\end{corollary}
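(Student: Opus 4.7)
The plan is to derive this corollary as a direct specialization of Theorem \ref{main-theorem} to a carefully chosen diagonal weight matrix, and then to observe that the weighted column-selection operator is precisely the column-normalized matrix. Since the hypothesis of Corollary \ref{main-theorem-col2} concerns the normalized matrix $\tilde{A}$, the natural choice is to let $W$ undo the column norms of $A$, namely $W=\diag(1/\|a_1\|_2,\ldots,1/\|a_m\|_2)$. With this choice, $A_{\cS}W_{\cS}$ equals the submatrix of $\tilde{A}$ indexed by $\cS$, so any lower bound on $\sigma_{\min}(A_{\cS}W_{\cS})$ delivered by Theorem \ref{main-theorem} immediately becomes a bound on $\sigma_{\min}(\tilde{A}_{\cS})$.

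Next I would compute the only quantity in the Theorem \ref{main-theorem} estimate that depends on $W$, namely $\|W^{-1}\|_F$. Since $W^{-1}=\diag(\|a_1\|_2,\ldots,\|a_m\|_2)$, its Frobenius norm satisfies
\[
\|W^{-1}\|_F^2 = \sum_{i=1}^m \|a_i\|_2^2 = \|A\|_F^2,
\]
so $\|A\|_F/\|W^{-1}\|_F = 1$. Plugging into Theorem \ref{main-theorem}, the lower bound $\epsilon\,\|A\|_F/\|W^{-1}\|_F$ collapses to $\epsilon$, while the admissible subset size remains $\lfloor(1-\epsilon)^2\,\mathrm{srank}_4(A)\rfloor + 1$, matching (up to the conventional floor) the value of $k$ specified in the statement.

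There is no substantive obstacle to overcome: the argument is essentially an observation that weighting by inverse column norms is a conjugation that transforms Theorem \ref{main-theorem} into its natural ``column-normalized'' form. The only care needed is in identifying $A_{\cS}W_{\cS}$ with $\tilde{A}_{\cS}$ (a consequence of $W$ being diagonal, so that restriction to $\cS$ commutes with column scaling) and in confirming that $W$ satisfies the nonvanishing hypothesis, which holds provided $A$ has no zero columns; if it does, those columns may be discarded a priori since they cannot contribute to any $\cS$ with $\sigma_{\min}(\tilde{A}_{\cS})>0$.
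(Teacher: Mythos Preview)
Your proposal is correct and follows exactly the paper's approach: the paper states the corollary immediately after noting that one should set $W=\diag(1/\|a_1\|_2,\ldots,1/\|a_m\|_2)$ in Theorem~\ref{main-theorem}, which is precisely the specialization you describe, and the computation $\|W^{-1}\|_F=\|A\|_F$ is the only additional ingredient.
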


\begin{remark}
It is easy to see that Theorem \ref{main-theorem2} yields a wider range of the sampling parameter $k$ than that of Theorem \ref{main-theorem}. For example, it can be verified that if $\sigma_i(A)=O(1/\sqrt{i})$ for any $i\in\{1,\ldots,n\}$, then $\mbox{srank}_4(A)=O(\log^2 n)$ which may be much smaller than rank$(A)$ as $\mbox{rank}(A)=n$.
But there are also situations in which Theorem \ref{main-theorem} yields a better bound than that by Theorem \ref{main-theorem2}. Indeed, when $\sigma_{1}(A)=1$, $\sigma_{2}(A)=\ldots=\sigma_{m}(A)=O(\frac{1}{\sqrt{m}})$ and $k=(1-\epsilon)^2\mbox{srank}_4(A)+1$ for some $\epsilon\in(0,1)$. Then the bound provided by  Theorem \ref{main-theorem} is $\sigma_{\min}(A_SW_S)^2\geq O\big(\frac{\epsilon}{\|W^{-1}\|^2_F}\big)$, while in the same situation Theorem \ref{main-theorem2} yields the bound $\sigma_{\min}(A_SW_S)^2\geq O\big(\frac{1-\epsilon}{\|W^{-1}\|^2_Fr}\big)$.
Therefore, Theorem \ref{main-theorem2} and Theorem \ref{main-theorem} are independent of each other.
\end{remark}

\subsection{Related work}

Bourgian and Tzafriri \cite{bourgain1987} given the first result on the restricted invertibility, but only working with the square matrices. Later, Vershynin \cite{vershynin2001} extended their result to the case of rectangular matrices. Their proofs were based on a beautiful combination of probabilistic, combinatorial and analytic arguments. However, these proofs were non-constructive.

In \cite{spielman2012an}, Spielman and Srivastava provided a deterministic polynomial time algorithm to find the subset $\cS$ and improved the restricted invertibility of Bourgain-Tzafriri. Their proof used only basic linear algebra and can build the subset $\cS$ iteratively using a barrier potential function \cite{BSS14}.
Recently, Marcus, Spielman, and Srivastava \cite{spielman17} gave a different proof of such
result, using their powerful method of interlacing families \cite{spielman151,spielman152,spielmaniv}.
 Theorem $4.1$ in \cite{spielman17} shows that there exists a subset $\cS\subseteq\{1,\ldots,m\}$ of size $k\in(0,\mbox{srank}_4(A)\big]$ such that
$\sigma_{\min}(A_{\cS})^2\geq \frac{(\sqrt{\mbox{srank}_4(A)}-\sqrt{k})^2}{m}\cdot \frac{\|A\|_F^2}{\mbox{srank}_4(A)}.$
The algorithm in \cite{spielman17} runs in $O(kmn^{\theta+1})$ time.
By directly applying this method to hermitian matrices and their principal matrices, Ravichandran \cite{Mohan2} proved that for any $k\leq \mbox{srank}_4(A)$, there is a subset $\cS$ of size $k$ such that $\sigma_{\min}(A_{\cS})^2\geq\frac{\|A\|_2^2}{m}\big(\sqrt{1-\frac{k}{m}}
-\sqrt{\frac{k}{\mbox{srank}_{4}(A)}-\frac{k}{m}}\big)^2.$
In \cite{Naor2017}, Naor and Youssef also adopted the method of interlacing families of polynomials to consider the restricted invertibility problem.  Theorem $11$ in \cite{Naor2017} proved that there exists a subset $\cS$ of size $k<\mbox{rank}(A)$ such that
$\sigma_{\min}(A_{\cS})^2\geq \frac{(\sqrt{\mbox{rank}(A)}-\sqrt{k})^2}{m}\cdot\frac{\mbox{rank}(A)}{\|A^{\dagger}\|^2_F}.$

In \cite{youssef2014-2}, Youssef developed the weighted version of the restricted invertibility principle, adapting the techniques similar to \cite{BSS14,spielman2012an}. Theorem $1.1$ in \cite{youssef2014-2} proved that there exists a subset $\cS\subseteq\{1,\ldots,m\}$ of size
$
|\cS|\geq(1-\epsilon)^2\mbox{srank}_2(A)
$
such that
$\sigma_{\min}(A_{\cS}W_{\cS})\geq \epsilon\frac{\|A\|_F}{\|W^{-1}\|_F}.$
Here $\mbox{srank}_2(A):=\frac{\|A\|^2_F}{\|A\|^2_2}$ denotes the Schatten $2$-norm \emph{stable rank} of $A$.
 In \cite{youssef2014}, a normalized version of the restricted invertibility was studied.
 Theorem $3.1$ in \cite{youssef2014} proved that for any $\epsilon\in(0,1)$, there exists a subset $\cS\subseteq\{1,\ldots,m\}$ of size
$
|\cS|\geq (1-\epsilon)^2\mbox{srank}_2(A)
$
such that
$\frac{\epsilon}{2-\epsilon}\leq\sigma_{\min}(\tilde{A}_{\cS})\leq \sigma_{\max}(\tilde{A}_{\cS})\leq \frac{2-\epsilon}{\epsilon}$, where $\tilde{A}$ denotes the matrix $A$ with normalized columns.
Theorem \ref{main-theorem} and Corollary \ref{main-theorem-col2} are strict improvements on the above results, since they are available for a wider range of $k$. Here, we use the fact that
$
\mbox{srank}_4(A)=\frac{\big(\sum_{i}\sigma_i(A)^2\big)^2}{\sum_{i}\sigma_i(A)^4}\geq \frac{\big(\sum_{i}\sigma_i(A)^2\big)^2}{\sigma_1(A)^2\sum_{i}\sigma_i(A)^2}=\mbox{srank}_2(A)
$. Furthermore, if $A$ has many moderately large singular values, the above inequality can be far from tight. In addition, the bounds in Corollary \ref{main-theorem-col2} is tighter than that in  \cite{youssef2014}, as $\epsilon>\frac{\epsilon}{2-\epsilon}$ for $\epsilon\in(0,1)$.

In \cite{Naor2017}, Naor and Youssef provided an weighted version of the restricted invertibility. More precisely, combining Lemma $18$ and Theorem $9$ in \cite{Naor2017}, we can show that for any $k,r\in\mathbb{N}$ satisfying $k<r\leq \mbox{rank}(A)$, there exists a subset $\cS\subseteq\{1,\ldots,m\}$ of size $k$ and a universal constant $c>0$ such that
 \begin{equation}
 \label{rank-result}
 \sigma_{\min}(A_{\cS}W_{\cS})^2\geq c \cdot \frac{(r-k)\sum_{i=r}^{\mbox{rank}(A)}\sigma_i(A)^2}{\|W^{-1}\|^2_F\cdot r}.
 \end{equation}
 Their proof used a variety of deep tools from geometric functional analysis but was non-constructive. However, our proof could result in a deterministic algorithm. To the best of our knowledge, our algorithm is the first polynomial time algorithm for the weighted invertibility theorem in terms of rank. In addition, Theorem \ref{main-theorem2} can deal with full-rank selection while \eqref{rank-result} only deals with the case $k<\mbox{rank}(A)$. Besides, using the threshold parameter $r$, Theorem \ref{main-theorem} can reduce the effect caused by the smaller singular values of $A$, for which we have the following example.
\begin{example}
Suppose that the singular values of $A$ are $\sigma_{i}(A)=O(\sqrt{m-i+1})$ for $i=1,\ldots, m$ and let the sampling parameter $k=m-1$.
Then \eqref{rank-result} yields a subset $\cS$ of size $k$ for which $\sigma_{\min}(A_{\cS}W_{\cS})^2\geq O\big(\frac{1}{\|W^{-1}\|_F^2m}\big)$, while Theorem \ref{main-theorem2} yields such a subset with $\sigma_{\min}(A_{\cS}W_{\cS})^2\geq O\big(\frac{1}{\|W^{-1}\|^2_F\log m}\big)$.
\end{example}

 Very recently, in \cite{xx2019} the authors considered the problem of subset selection for matrices where their result is (only)  available for the case $AA^T=I$ and $k = \mbox{rank}(A)$. The infinite dimensional restricted invertibility has also been considered in the literature.
In \cite{casazza12}, Casazza and Pfander gave the definition for infinite dimensional restricted invertibility based on the notion of density from frame theory, and then they prove a infinite dimensional restricted invertibility theorem for $\ell_1$-localizd operators on arbitrary Hilbert spaces.

\subsection{Organization}
The paper is organized as follows.  In Section \ref{section-pre}, we will introduce
some notations and useful lemmas. We present the proof of Theorem \ref{main-theorem2} in Section \ref{section-proof}. In Section 4, we finally provide a deterministic selection algorithm for computing the subset
$\cS$ in Theorems \ref{main-theorem2} and \ref{main-theorem}.

\section{Preliminaries}
\label{section-pre}
\subsection{Notations and lemmas}
\label{section-pre1}

We use $\partial_{x}$ to denote  the operator that performs differentiation with respect to $x$. We say that a univariate polynomial is \emph{real-rooted} if all of its coefficients and roots are real. For a real-rooted polynomial $p$, we let $\lambda_{\min}(p)$  denote the smallest root of $p$ and we use $\lambda_{\ell}(p)$ to denote the $\ell$-th largest root of $p$.  We use $\mathbb{P}$ and $\mathbb{E}$  to denote the probability of an event and expectation of a random variable, respectively.

The following  inequality can help us to estimate the lower bound of the sum of a certain convex function.
\begin{lemma}
\label{lemma-Jensen}
Let $f$ be a function from $\mathbb{R}^n$ to $(-\infty,+\infty]$. Then $f$ is convex if and only if
$$
f(\mu_1x_1+\cdots+\mu_mx_m)\leq \mu_1f(x_1)+\cdots+\mu_mf(x_m)
$$
whenever $\mu_1\geq0,\ldots,\mu_m\geq 0, \mu_1+\cdots+\mu_m=1$.
\end{lemma}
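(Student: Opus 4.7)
The plan is to prove the two directions separately. For the ``only if'' direction, the conclusion follows immediately by taking $m=2$, which recovers the very definition of convexity of $f$ (a function is convex precisely when $f(\mu x_1 + (1-\mu) x_2) \le \mu f(x_1) + (1-\mu) f(x_2)$ for all $\mu \in [0,1]$ and all $x_1,x_2$). So this direction is essentially tautological.

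For the ``if'' direction, I would proceed by induction on $m$. The base case $m=1$ is trivial, and $m=2$ is exactly the definition of convexity. For the inductive step, assume the inequality holds for all convex combinations of at most $m-1$ points and consider nonnegative weights $\mu_1,\ldots,\mu_m$ summing to $1$. If $\mu_m=1$, then $\mu_1=\cdots=\mu_{m-1}=0$ and the inequality is an equality, so assume $\mu_m<1$ (re-indexing if needed). Rewrite
$$
\mu_1 x_1 + \cdots + \mu_m x_m \;=\; (1-\mu_m)\,\Bigl(\sum_{i=1}^{m-1}\tfrac{\mu_i}{1-\mu_m}\,x_i\Bigr) + \mu_m\, x_m,
$$
noticing that the coefficients $\mu_i/(1-\mu_m)$ are nonnegative and sum to $1$. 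Applying the two-point convexity inequality yields
$$
f\Bigl(\sum_{i=1}^{m}\mu_i x_i\Bigr) \;\le\; (1-\mu_m)\, f\Bigl(\sum_{i=1}^{m-1}\tfrac{\mu_i}{1-\mu_m}\,x_i\Bigr) + \mu_m\, f(x_m),
$$
and then the inductive hypothesis applied to the $(m-1)$-point convex combination inside $f$ on the right-hand side gives the desired bound after combining and noting that $(1-\mu_m)\cdot \mu_i/(1-\mu_m) = \mu_i$.

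The proof is almost entirely mechanical once the reduction is set up correctly. The only mild subtlety is handling the case $\mu_m = 1$ (or more generally making sure we do not divide by zero when renormalizing), which I handle by choosing an index whose weight is strictly less than $1$, or treating the degenerate case separately. Since $f$ is allowed to take the value $+\infty$, the inequalities should be interpreted in the extended real line, but no additional care is needed because all manipulations only involve nonnegative combinations and monotone operations consistent with the extended ordering.
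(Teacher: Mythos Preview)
The paper does not actually supply a proof of this lemma; it is stated as a standard preliminary fact (Jensen's inequality) and used later without justification. Your argument is the textbook proof and is mathematically sound: one direction is the $m=2$ specialization, and the other is the induction on $m$ using the renormalization trick $\mu_i/(1-\mu_m)$.

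One small correction: you have the labels ``if'' and ``only if'' reversed. In the biconditional ``$f$ is convex if and only if Jensen's inequality holds,'' the \emph{only if} direction is ``convex $\Rightarrow$ Jensen for all $m$,'' which is precisely the statement requiring your induction argument; the \emph{if} direction is ``Jensen for all $m$ $\Rightarrow$ convex,'' which is the one that follows trivially by specializing to $m=2$. The content of your proof is correct once the two paragraphs are relabeled accordingly.
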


We also need the following lemma.
\begin{lemma}[\cite{spielman152}, Lemma $4.2$]
\label{lem-p1}
For every square matrix $A$ and random vector ${\bf r}$,
$$
\mathbb{E}\det\big[A-{\bf rr}^T\big]=(1-\partial_t)\det\big[A+t\mathbb{E}{\bf rr}^T\big]\big|_{t=0}.
$$
\end{lemma}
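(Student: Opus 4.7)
The plan is to peel off the identity by first reducing the left-hand side to a scalar functional of the first-moment matrix $M := \mathbb{E}[{\bf r}{\bf r}^T]$, and then recognizing that same functional on the right-hand side as the constant minus the linear coefficient of the polynomial $t \mapsto \det[A + tM]$.

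For the left-hand side, I would exploit the fact that ${\bf r}{\bf r}^T$ has rank one. For any fixed vector $v$, the matrix-determinant lemma (equivalently, a direct multilinear expansion of $\det$ along the rank-one perturbation) gives the exact affine identity
\[
\det[A + s\, vv^T] \;=\; \det[A] + s\cdot v^T \mathrm{adj}(A)\, v \;=\; \det[A] + s\cdot \mathrm{tr}\bigl(\mathrm{adj}(A)\, vv^T\bigr),
\]
where the second equality uses the cyclic property of the trace. Specializing to $s=-1$ and $v={\bf r}$, then taking expectations and interchanging $\mathbb{E}$ with the linear operator $\mathrm{tr}(\mathrm{adj}(A)\,\cdot\,)$, I obtain
\[
\mathbb{E}\det[A - {\bf r}{\bf r}^T] \;=\; \det[A] - \mathrm{tr}\bigl(\mathrm{adj}(A)\, M\bigr).
\]

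For the right-hand side, $\det[A + tM]$ is a polynomial in $t$ of degree at most $n$. Its value at $t=0$ is $\det[A]$, and by Jacobi's formula its derivative at $t=0$ equals $\mathrm{tr}(\mathrm{adj}(A)\, M)$. Hence
\[
(1 - \partial_t)\det[A + tM]\bigr|_{t=0} \;=\; \det[A] - \mathrm{tr}\bigl(\mathrm{adj}(A)\, M\bigr),
\]
which matches the expression just derived for the left-hand side, completing the proof.

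The only real obstacle is conceptual rather than technical: for a general matrix $M$, the polynomial $\det[A + tM]$ carries terms of every order up to $t^n$, yet the operator $(1 - \partial_t)|_{t=0}$ deliberately retains only the constant and linear coefficients. The identity works precisely because the rank-one expansion in Step~1 is \emph{genuinely} affine in $s$, so only its constant and linear parts survive the average — and these are exactly the two pieces extracted by $(1-\partial_t)|_{t=0}$. No higher moments of ${\bf r}$ play any role, which is what makes the identity so useful for subsequent interlacing-family arguments.
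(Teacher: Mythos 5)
Your proof is correct. The paper itself does not prove this lemma but cites it from Marcus--Spielman--Srivastava (Interlacing Families II, Lemma 4.2), and your argument is essentially their proof with the computations made explicit: the rank-one identity $\det[A+s\,vv^T]=\det[A]+s\,\mathrm{tr}(\mathrm{adj}(A)\,vv^T)$ shows the left-hand side depends only affinely on $\mathbf{r}\mathbf{r}^T$ and hence only on its first moment $M=\mathbb{E}\mathbf{r}\mathbf{r}^T$, while Jacobi's formula identifies $\det[A]-\mathrm{tr}(\mathrm{adj}(A)M)$ with the constant-minus-linear coefficient of $\det[A+tM]$, which is precisely what $(1-\partial_t)(\cdot)|_{t=0}$ extracts.
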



\subsection{Interlacing families}
Our proof of Theorem \ref{main-theorem} builds on the method of interlacing families which is a powerful technology developed in \cite{spielman151,spielman152} by Marcus, Spielman and Srivastava in work of the solution to the Kadison-Singer problem.

Let $g(x)=\alpha_0 \prod\limits_{i=1}^{n-1}(x-\alpha_i)$ and $f(x)=\beta_0\prod\limits_{i=1}^{n}(x-\beta_i)$ be two real-rooted polynomials. We say $g$ interlaces $f$ if
$$
\beta_1\leq \alpha_1\leq\beta_2\leq\alpha_2\cdots\leq\alpha_{n-1}\leq \beta_n.
$$
We say that polynomials $f_1,\ldots,f_k$ have a \emph{common interlacing} if there is a polynomial $g$ so that $g$ interlaces $f_i$ for each $i$.

Following \cite{spielman152}\footnote{One may refer to \cite{spielman17,xx2019} for a more general definition of the interlacing families.}, we define the notion of an interlacing family of polynomials as follows.
Let $\cS_1,\ldots,\cS_m$ be finite sets, and for every assignment $s_1,\ldots,s_m\in \cS_1\times\cdots\times \cS_m$, let $f_{s_1,\ldots,s_m}(x)$ be a real-rooted degree $n$ polynomial with positive leading coefficient. For a partial assignment $s_1,\ldots,s_k\in \cS_1\times\cdots\times \cS_k$ with $k<m$, define
$$
f_{s_1,\ldots,s_k}:=\sum_{s_{k+1}\in \cS_{k+1},\ldots,s_m\in \cS_m}f_{s_1,\ldots,s_k,s_{k+1},\ldots,s_m}
$$
as well as
$$
f_{\emptyset}:=\sum_{s_{1}\in \cS_{1},\ldots,s_m\in \cS_m}f_{s_1,\ldots,s_m}.
$$

We say the polynomials $\{f_{s_1,\ldots,s_m}\}$ form an \emph{interlacing family} if for all $k=0,\ldots,m-1$ and all $s_1,\ldots,s_k\in \cS_1\times\ldots\times \cS_k$, the polynomials
$
\{f_{s_1,\ldots,s_k,t}\}_{t\in \cS_{k+1}}
$
have a common interlacing.

\begin{figure}[H]
\centering
\includegraphics[width=0.5\textwidth]{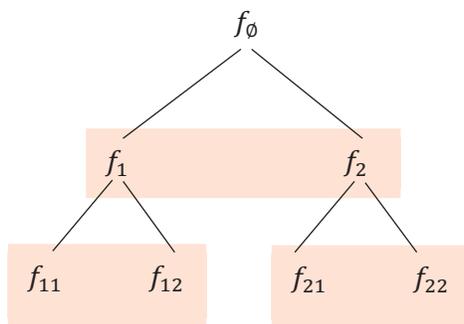}
\caption{Let $\cS_1=\{1,2\}$ and $\cS_2=\{1,2\}$. The polynomials $\{f_{ij}\}_{1\leq i,j\leq 2}$ form an interlacing family. The orange blocks denote subsets of polynomials that have a common interlacing. For every fixed $i$ $(i=\emptyset,1,2)$, each polynomial $f_{i}$ is a summation of the polynomials $\{f_{ij}\}_{j\in\{1,2\}}$. }
\label{figure:1}
\end{figure}

The following lemma which is proved in \cite[Theorem $2.7$]{spielman17} shows the utility of forming an interlacing family.
\begin{lemma}[\cite{spielman17}, Theorem $2.7$]
\label{root-interlacing}
Let $\cS_1,\ldots,\cS_m$ be finite sets, and let $\{f_{s_1,\ldots,s_m}\}$ be an interlacing family of degree $n$ polynomials. Then for all indices $1\leq j\leq n$, there exists some $a_1,\ldots,a_m\in \cS_1\times\ldots\times \cS_m$ and $b_1,\ldots,b_m\in \cS_1\times\ldots\times \cS_m$ such that
$$
\lambda_{j}(f_{a_1,\ldots,a_m})\geq \lambda_{j}(f_{\emptyset})\geq \lambda_{j}(f_{b_1,\ldots,b_m}).
$$
\end{lemma}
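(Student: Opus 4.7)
The plan is to reduce the lemma to a more basic fact about convex combinations of polynomials sharing a common interlacing, and then to iterate that fact down the tree illustrated in Figure \ref{figure:1}.

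The auxiliary claim I need is: if real-rooted degree $n$ polynomials $g_1,\ldots,g_k$ with positive leading coefficients share a common interlacing, and $h=\sum_i c_i g_i$ with $c_i>0$, then $h$ is real-rooted, the same polynomial interlaces $h$, and for every $1\leq j\leq n$ there are indices $i_-,i_+$ with
\[
\lambda_j(g_{i_-})\;\leq\;\lambda_j(h)\;\leq\;\lambda_j(g_{i_+}).
\]
To establish this, let $g$ be a common interlacer with roots $\alpha_1\geq\cdots\geq\alpha_{n-1}$, and set $\alpha_0=+\infty$, $\alpha_n=-\infty$. A direct product-of-factors sign count shows that at every $\alpha_\ell$ each $g_i$ satisfies $\mathrm{sign}\,g_i(\alpha_\ell)\in\{0,(-1)^\ell\}$; since the $c_i$ are positive, the same holds for $h(\alpha_\ell)$, so the signs of $h$ alternate at the $\alpha_\ell$. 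The intermediate value theorem then places one root of $h$ in each of the $n$ intervals $(\alpha_\ell,\alpha_{\ell-1})$ (the outermost two being unbounded), so $h$ is real-rooted of degree $n$ and is itself interlaced by $g$. Now fix $j$ and set $\beta_+=\max_i\lambda_j(g_i)\in[\alpha_j,\alpha_{j-1}]$. On the subinterval $[\lambda_j(g_i),\alpha_{j-1}]$ each $g_i$ has no further roots, hence keeps sign $(-1)^{j-1}$ there. Therefore $g_i(\beta_+)$ is either zero or of sign $(-1)^{j-1}$ for every $i$, and the same holds for $h(\beta_+)$; combined with $\mathrm{sign}\,h(\alpha_{j-1})\in\{0,(-1)^{j-1}\}$ this forces the unique root of $h$ inside $[\alpha_j,\alpha_{j-1}]$ to lie in $[\alpha_j,\beta_+]$, i.e.\ $\lambda_j(h)\leq\beta_+$. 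The lower bound is obtained symmetrically using $\beta_-=\min_i\lambda_j(g_i)$.

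With the auxiliary claim in hand, I would prove the main lemma by induction along the levels of the family. By the definition of an interlacing family, the polynomials $\{f_t\}_{t\in\cS_1}$ share a common interlacing and their sum is $f_{\emptyset}$, so the claim supplies $a_1,b_1\in\cS_1$ with $\lambda_j(f_{a_1})\geq\lambda_j(f_{\emptyset})\geq\lambda_j(f_{b_1})$. Next, the polynomials $\{f_{a_1,t}\}_{t\in\cS_2}$ share a common interlacing and sum to $f_{a_1}$, so the claim supplies $a_2\in\cS_2$ with $\lambda_j(f_{a_1,a_2})\geq\lambda_j(f_{a_1})\geq\lambda_j(f_{\emptyset})$. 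Iterating this selection $m$ times produces the desired tuple $a_1,\ldots,a_m$; the parallel argument using the $b$-choices at each level produces $b_1,\ldots,b_m$.

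The main obstacle is the auxiliary claim, specifically its two sign-analysis steps: showing that a common interlacer of the summands is inherited by the sum, and showing that $\lambda_j(h)$ is sandwiched between $\min_i\lambda_j(g_i)$ and $\max_i\lambda_j(g_i)$. Minor care is needed when some root of a $g_i$ coincides with a root of $g$, but this is handled by letting the corresponding sign be zero and noting that every inequality above is preserved. Once the auxiliary claim is settled, the main lemma is a clean tree traversal that walks from the root polynomial $f_\emptyset$ down to a chosen leaf.
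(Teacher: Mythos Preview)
The paper does not supply its own proof of this lemma; it simply quotes the result as \cite[Theorem~2.7]{spielman17}. Your proposal is correct and is, in fact, the standard argument that appears in the Marcus--Spielman--Srivastava papers: first establish the ``averaging'' fact that if $g_1,\dots,g_k$ share a common interlacing then for every $j$ the $j$-th largest root of any positive combination is sandwiched between $\min_i\lambda_j(g_i)$ and $\max_i\lambda_j(g_i)$, and then descend the tree of partial assignments one level at a time. The sign-counting you outline (evaluating each $g_i$ at the interlacing nodes $\alpha_\ell$, observing the common sign $(-1)^\ell$, and using the intermediate value theorem) is exactly the mechanism used in \cite{spielman151,spielman152,spielman17}; the boundary cases where roots coincide are, as you note, harmless and are typically dispatched either by the ``sign is zero or $(-1)^\ell$'' convention you adopt or by a small perturbation argument. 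So there is nothing to compare against in the present paper, and your proof would serve as a faithful reconstruction of the cited one.
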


Let $u_1,\ldots,u_m$ be independent random vectors in $\mathbb{R}^n$ with finite support. Let $\ell_i$ be the size of the support of the random vector $u_i$, and let $u_i$ take the values $v_{i,1},\ldots,v_{i,\ell_i}$ with probabilities $p_{i,1},\ldots,p_{i,\ell_i}$. For $j_1\in[\ell_1],\ldots,j_m\in[\ell_m]$, define
\begin{equation*}
\label{fs1s2}
f_{j_1,\ldots,j_m}(x):=\bigg(\prod\limits_{i=1}^m p_{i,j_i}\bigg)\det\bigg[xI-\sum\limits_{i=1}^k v_{i,j_i}v_{i,j_i}^T\bigg].
\end{equation*}

\begin{lemma}[\cite{spielman152}, Theorem $4.5$]
\label{out-interlacing}
The polynomials $\{f_{j_1,\ldots,j_m}(x)\}$ form an interlacing family.
\end{lemma}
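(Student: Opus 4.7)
The plan is to verify the interlacing-family property directly from its definition: for every $k=0,1,\ldots,m-1$ and every partial assignment $(s_1,\ldots,s_k)\in\cS_1\times\cdots\times\cS_k$, the polynomials $\{f_{s_1,\ldots,s_k,t}\}_{t\in\cS_{k+1}}$ must admit a common interlacing. I would invoke the standard criterion (essentially Hermite--Kakeya--Obreschkoff / Dedieu): a family of real-rooted polynomials of the same degree with positive leading coefficients shares a common interlacing if and only if every convex combination of them is real-rooted. Since each $f_{j_1,\ldots,j_m}(x)$ is, up to a positive scalar, the characteristic polynomial of a positive semidefinite matrix (hence real-rooted with a positive leading coefficient), the whole interlacing-family property reduces to showing that every convex combination of the children at a fixed parent node is real-rooted.

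Fix a partial assignment $(s_1,\ldots,s_k)$ and nonnegative weights $\{\mu_t\}_{t\in\cS_{k+1}}$ summing to $1$. Unrolling the definition of $f_{s_1,\ldots,s_k,t}$ and marginalizing the indices $k+2,\ldots,m$, the combination $\sum_{t}\mu_t\,f_{s_1,\ldots,s_k,t}(x)$ equals a positive constant times
$$
\E\,\det\!\Bigl[xI-\sum_{i=1}^{k}v_{i,s_i}v_{i,s_i}^T-\tilde u_{k+1}\tilde u_{k+1}^T-\sum_{i=k+2}^{m}u_iu_i^T\Bigr],
$$
where $\tilde u_{k+1}$ is a rescaled random vector taking value $v_{k+1,t}$ with probability proportional to $\mu_t p_{k+1,t}$, and $u_{k+2},\ldots,u_m$ are the original random vectors. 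Absorbing the deterministic piece $M:=\sum_{i=1}^{k}v_{i,s_i}v_{i,s_i}^T$ into a fixed symmetric shift, the problem reduces to the following master claim: for any real symmetric $M\in\R^{n\times n}$ and any independent, finitely-supported random vectors $w_1,\ldots,w_\ell\in\R^n$, the polynomial
$$
P(x):=\E\,\det\!\Bigl[xI-M-\sum_{i=1}^{\ell}w_iw_i^T\Bigr]
$$
is real-rooted.

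To establish the master claim I would apply Lemma \ref{lem-p1} once for each $i$, using independence and the commutativity of the operators $1-\partial_{t_i}$, to obtain
$$
P(x)=\Bigl[\prod_{i=1}^{\ell}(1-\partial_{t_i})\Bigr]\,\det\!\Bigl[xI-M+\sum_{i=1}^{\ell}t_iA_i\Bigr]\Big|_{t_1=\cdots=t_\ell=0},
$$
with $A_i:=\E[w_iw_i^T]\succeq 0$. The inner polynomial $\det[-M+xI+\sum_i t_iA_i]$ has the form $\det[A_0+\sum_j z_jB_j]$ with $A_0$ real symmetric and every coefficient matrix $B_j$ (namely $I$ and the $A_i$) positive semidefinite, so it is real stable in $(x,t_1,\ldots,t_\ell)$ by the standard Borcea--Br\"and\'en determinantal construction of real stable polynomials. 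Each operator $1-\partial_{t_i}$ preserves real stability, and specializing $t_i$ to the real value $0$ preserves real stability by Hurwitz's theorem. Hence $P(x)$ is real stable in the single variable $x$, which is the same as being real-rooted.

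The main obstacle is the real-stability machinery invoked at the end: verifying that $\det[A_0+\sum_j z_jB_j]$ with PSD coefficient matrices is real stable, and that $1-\partial_{t_i}$ is stability-preserving. These are the standard Borcea--Br\"and\'en ingredients and constitute the only nontrivial analytic input. Everything else---the equivalence between common interlacing and real-rootedness of all convex combinations, the probability rescaling that turns a convex combination into an honest expectation, and the iterated use of Lemma \ref{lem-p1}---amounts to bookkeeping.
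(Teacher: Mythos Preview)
Your argument is correct and is essentially the proof given in \cite{spielman152}, Theorem~4.5: reduce common interlacing at each node to real-rootedness of convex combinations, rewrite any such combination as an expected characteristic polynomial of a sum of independent rank-one PSD matrices (via the probability-rescaling trick), then apply Lemma~\ref{lem-p1} iteratively and the Borcea--Br\"and\'en real-stability toolkit (real stability of $\det[xI-M+\sum_i t_iA_i]$ with $A_i\succeq 0$, closure under $1-\partial_{t_i}$, and closure under real specialization) to conclude real-rootedness of the univariate result. Note that the present paper does not supply its own proof of this lemma---it simply imports the statement from \cite{spielman152}---so there is nothing further to compare; your write-up faithfully reconstructs the cited argument.
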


\subsection{Lower barrier function}
\label{section-pre3}
In this subsection we introduce the lower barrier potential function from \cite{BSS14,spielman152}.
For a real-rooted polynomial $p(x)$,
one can use the evolution of such barrier function to track the approximation locations
of the roots of $(1-t\partial_{x})p(x)$ where $t>0$.

\begin{definition}
For a real-rooted polynomial $p(x)$ with roots $\lambda_1,\ldots,\lambda_n$, define the lower barrier function of $p(x)$ as
$$
\Phi_{p}(x):=-\frac{p'(x)}{p(x)}=\sum\limits_{i=1}^n\frac{1}{\lambda_i-x}.
$$
\end{definition}

We have the following  lemma for the lower barrier function.
\begin{lemma}[\cite{spielman17}, Lemma $4.3$]
\label{lemma-lbf}
Let $p(x)$ be a real-rooted polynomial. Suppose that $b<\lambda_{\min}(p(x))$ and $\alpha>0$ satisfying
$$
\Phi_{p}(b)\leq \alpha.
$$
Then for any $t>0$ and $\delta:=\frac{t}{1+t\alpha}$, we have $b+\delta< \lambda_{\min}\big((1-t\partial_x)p\big)$ and
$$
\Phi_{(1-t\partial_x)p}(b+\delta)\leq \Phi_{p}(b).
$$
\end{lemma}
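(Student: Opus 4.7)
The plan is to reduce the lemma to an algebraic identity for the lower barrier function under the operator $1 - t\partial_x$. The starting observation is that wherever $p(x) \neq 0$,
\[
(1-t\partial_x)p(x) \;=\; p(x) - t p'(x) \;=\; p(x)\bigl(1 + t\, \Phi_p(x)\bigr),
\]
using $\Phi_p(x) = -p'(x)/p(x)$. Taking the logarithmic derivative of both sides then gives the key identity
\[
\Phi_{(1-t\partial_x)p}(x) \;=\; \Phi_p(x) \;-\; \frac{t\,\Phi_p'(x)}{1 + t\,\Phi_p(x)},
\]
valid wherever $p(x) \neq 0$ and $1 + t\Phi_p(x) > 0$. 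This identity is the only nontrivial input to the whole argument.

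To establish $b + \delta < \lambda_{\min}((1-t\partial_x)p)$, I first argue $b + \delta < \lambda_{\min}(p)$: every summand of $\Phi_p(b) = \sum_i \tfrac{1}{\lambda_i - b}$ is positive, so the single term $\tfrac{1}{\lambda_{\min}(p) - b}$ is bounded by $\Phi_p(b) \leq \alpha$, giving $\lambda_{\min}(p) - b \geq 1/\alpha > t/(1+t\alpha) = \delta$. Next, the factorization $(1-t\partial_x)p = p \cdot (1 + t\Phi_p)$ shows that on $(-\infty, \lambda_{\min}(p))$ both factors are nonzero of definite sign, so $(1-t\partial_x)p$ has no root strictly below $\lambda_{\min}(p)$; hence $\lambda_{\min}((1-t\partial_x)p) \geq \lambda_{\min}(p) > b + \delta$.

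For the barrier inequality, write $q := (1-t\partial_x)p$, $\beta := \Phi_p(b) \leq \alpha$, $y := b + \delta$, $u := \Phi_p(y)$, $v := \Phi_p'(y)$. The identity above recasts $\Phi_q(y) \leq \beta$ as $(u - \beta)(1 + tu) \leq tv$. I introduce the auxiliary quantities $g_i := \tfrac{1}{\lambda_i - b}$ and $\tilde g_i := \tfrac{g_i}{1 - \delta g_i} = \tfrac{1}{\lambda_i - y}$, so that $u = \sum \tilde g_i$, $v = \sum \tilde g_i^2$, and by the inverse relation $g_i = \tilde g_i/(1 + \delta \tilde g_i)$ also $\beta = \sum \tilde g_i/(1 + \delta \tilde g_i)$. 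A direct computation gives $u - \beta = \delta W$, where $W := \sum g_i \tilde g_i = \sum \tilde g_i^2/(1+\delta \tilde g_i)$. Using $\beta \leq \alpha$ together with $\delta(1 + t\alpha) = t$, the factor $\delta(1 + t\beta) \leq t$ reduces the target inequality to $v - W \geq \delta^2 W^2$. I then compute $v - W = \delta \sum \tilde g_i^3/(1 + \delta \tilde g_i)$, and apply Cauchy--Schwarz in the form
\[
W^2 \;=\; \Bigl(\sum \frac{\tilde g_i^2}{1 + \delta \tilde g_i}\Bigr)^2 \;\leq\; \Bigl(\sum \frac{\tilde g_i^3}{1 + \delta \tilde g_i}\Bigr)\Bigl(\sum \frac{\tilde g_i}{1 + \delta \tilde g_i}\Bigr) \;=\; \beta \sum \frac{\tilde g_i^3}{1 + \delta \tilde g_i},
\]
which yields $v - W \geq \delta W^2/\beta \geq \delta^2 W^2$, the last step using $\delta \beta \leq \delta \alpha < 1$.

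The main obstacle is setting up the correct Cauchy--Schwarz pairing in the final estimate; the change of variables $g_i \leftrightarrow \tilde g_i$ is essential because the inverse relation $g_i = \tilde g_i/(1 + \delta \tilde g_i)$ is precisely what makes $\sum \tilde g_i/(1+\delta \tilde g_i) = \beta$ appear as one of the Cauchy--Schwarz factors, producing the extra $1/\beta$ that is needed to close the bound $v - W \geq \delta^2 W^2$. Everything else is algebraic bookkeeping from the definitions and the identity in the first paragraph.
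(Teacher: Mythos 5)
The paper does not supply its own proof of this lemma; it is quoted verbatim from Marcus--Spielman--Srivastava (Interlacing Families III, Lemma 4.3) and used as a black box. Your proof is correct and is, in substance, the standard lower-barrier argument from that line of work: the factorization $(1-t\partial_x)p = p\,(1+t\Phi_p)$, the logarithmic-derivative identity $\Phi_{(1-t\partial_x)p} = \Phi_p - t\Phi_p'/(1+t\Phi_p)$, the observation that $1+t\Phi_p>0$ below $\lambda_{\min}(p)$ so no new small root appears, and a Cauchy--Schwarz step to close the barrier inequality. Two small remarks rather than objections: (i) your reduction can be streamlined by noting directly that $\tilde g_i - g_i = \delta g_i\tilde g_i$ (partial fractions), so $u-\beta = \delta\sum g_i\tilde g_i$ and $v - W = \delta\sum g_i\tilde g_i^2$, and then Cauchy--Schwarz in the form $\bigl(\sum g_i\tilde g_i\bigr)^2 \le \bigl(\sum g_i\bigr)\bigl(\sum g_i\tilde g_i^2\bigr)$ gives $v - W \ge \delta W^2/\beta \ge \delta^2 W^2$ in one line; (ii) you implicitly use that $(1-t\partial_x)p$ is again real-rooted (so that $\Phi_{(1-t\partial_x)p}$ has the usual sum-over-roots form and the lemma can be iterated); this is the standard fact that $p - tp'$ is real-rooted for real-rooted $p$, worth a one-line citation since the paper applies the lemma $k$ times.
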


\section{Proof of Theorems \ref{main-theorem2} and \ref{main-theorem}}
\label{section-proof}
In this section we give the proof of the main results. The proof consists of two main parts. Firstly, choosing the subset $\cS$ with certain probabilities, we show that the characteristic
polynomials of the related sub-matrices form an interlacing family, and then we present an expression for the expected characteristic polynomial (the summation of the polynomials in the family). Secondly, we use the barrier function argument to establish a lower bound on the $k$-th largest zero of the expected characteristic polynomial.

Suppose that
$$
A=[a_1,\ldots,a_m]\in\mathbb{R}^{n\times m} \ \ \mbox{and} \ \ W=\diag(w_1,\ldots,w_m).
$$
We randomly pick up a column of $A$ with the probability proportional to the inverse squares of the weights, that is
$$
\mathbb{P}\{\mbox{the $i$th column is selected}\}=p_i:=\frac{w_i^{-2}}{\|W^{-1}\|^2_F}.
$$
For any subset $\cS=(s_1,\ldots,s_k)\in[m]^k$, where $[m]:=\{1,\ldots,m\}$, we set
\begin{equation}
\label{def-f}
f_{s_1,\ldots,s_k}(x):=\bigg(\prod\limits_{i=1}^k p_{s_i}\bigg)\det\bigg[xI-\sum\limits_{i=1}^k w_{s_i}^2a_{s_i}a_{s_i}^T\bigg].
\end{equation}
It can be see that
$
\sigma_{\min}(A_{\cS}W_{\cS})^2=\lambda_k \big(f_{\cS}(x)\big),
$
as
$$
\sigma_{\min}(A_{\cS}W_{\cS})^2=\lambda_{\min}\big(W_{\cS}^TA_{\cS}^TA_{\cS}W_{\cS}\big)=\lambda_{k}\big(A_{\cS}W_{\cS}W_{\cS}^TA_{\cS}^T\big)=\lambda_{k}\bigg( \sum\limits_{i\in\cS}w_{i}^2a_{i}a_{i}^T \bigg).
$$
By Lemma \ref{out-interlacing}, we have the following lemma.
\begin{lemma}
\label{if-def}
The polynomials $\{f_{s_1,\ldots,s_k}(x)\}$ defined by \eqref{def-f} form an interlacing family.
\end{lemma}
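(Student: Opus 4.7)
The plan is to derive this lemma as a direct specialization of Lemma \ref{out-interlacing}, which is the general interlacing-family statement of Marcus--Spielman--Srivastava. The only real work is to match the setup in \eqref{def-f} to the template in that lemma, after which interlacing is inherited automatically.

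Concretely, I would introduce $k$ independent random vectors $u_1,\ldots,u_k$ in $\mathbb{R}^n$, each identically distributed with finite support of size $\ell_i = m$, by setting
$$
v_{i,j} := w_j a_j \quad \text{and} \quad p_{i,j} := p_j = \frac{w_j^{-2}}{\|W^{-1}\|_F^2}
$$
for $i \in [k]$ and $j \in [m]$. Since $\sum_j p_j = 1$, these are valid probability distributions. This exactly mirrors the hypothesis preceding Lemma \ref{out-interlacing}, with the index $m$ there playing the role of $k$ here.

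Next I would verify that the template polynomials built from these $u_i$'s coincide with those defined in \eqref{def-f}. For any $(s_1, \ldots, s_k) \in [m]^k$, the Marcus--Spielman--Srivastava construction yields
$$
\left( \prod_{i=1}^k p_{i, s_i} \right) \det\left[ xI - \sum_{i=1}^k v_{i,s_i} v_{i,s_i}^T \right]
= \left( \prod_{i=1}^k p_{s_i} \right) \det\left[ xI - \sum_{i=1}^k w_{s_i}^2 a_{s_i} a_{s_i}^T \right],
$$
which is precisely $f_{s_1, \ldots, s_k}(x)$. Invoking Lemma \ref{out-interlacing} then gives the interlacing family structure, completing the proof.

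There is no serious obstacle here: the lemma is essentially a bookkeeping step that packages the particular weighted-column sampling scheme used in this paper into the general framework. The only thing to be careful about is that the probabilities $p_j$ do not depend on $i$, so all the $u_i$ are i.i.d.; this is perfectly compatible with Lemma \ref{out-interlacing}, which only requires independence (not distinct distributions) among the $u_i$.
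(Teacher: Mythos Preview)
Your proposal is correct and follows exactly the paper's approach: the paper simply states ``By Lemma \ref{out-interlacing}, we have the following lemma'' without further elaboration, and you have spelled out the straightforward identification of the random vectors $u_i$ (taking values $w_j a_j$ with probabilities $p_j$) needed to invoke that template. There is nothing to add.
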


Construct an associated polynomial $f_{\emptyset}(x)$ called  \emph{expected characteristic polynomial} as
$$
f_{\emptyset}(x):=\mathbb{E}\det\bigg[xI-\sum\limits_{i=1}^k w_{s_i}^2a_{s_i}a_{s_i}^T\bigg]=\sum\limits_{s_1,\ldots,s_k\in[m]^k} f_{s_1,\ldots,s_k}(x).
$$
Lemma \ref{root-interlacing} tells us that every interlacing family of polynomials $\{f_{\cS}\}$ provides a associated polynomial  $f_{\emptyset}(x)$ with the property that there always exists a polynomial $f_{\widehat{\cS}}$ such that
$$
\lambda_{k}\big(f_{\widehat{\cS}}(x)\big)\geq \lambda_{k}\big(f_{\emptyset}(x)\big).
$$
As a result, if one can lower bound the $k$-th largest root of the polynomial  $f_{\emptyset}(x)$, then we claim that there exists a polynomial in the family having a $k$-th largest root which satisfies the same lower bound. The following lemma gives a formula for $f_{\emptyset}(x)$.
\begin{lemma}
\label{lemma-fix}
$f_{\emptyset}(x)$ has the following formula
$$f_{\emptyset}(x)=x^{n-k}\prod\limits_{i=1}^{\emph{rank}(A)}\bigg(1-\frac{\sigma_i(A)^2}{\|W^{-1}\|^2_F}\partial_x\bigg) x^{k},$$
where $\sigma_i(A),i=1,\ldots,\emph{rank}(A)$ are the singular values of $A$.
\end{lemma}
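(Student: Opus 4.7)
The plan is to reduce $f_\emptyset(x)$ to a deterministic object by the Marcus--Spielman--Srivastava recipe and then identify it explicitly using the eigenvalues of $AA^{T}/\|W^{-1}\|_F^2$. Set $u_i := w_{s_i}a_{s_i}$ for $i=1,\ldots,k$, so that \eqref{def-f} rewrites $f_\emptyset(x)$ as $\mathbb{E}\det[xI - \sum_{i=1}^k u_i u_i^{T}]$. The sampling probabilities $p_j = w_j^{-2}/\|W^{-1}\|_F^2$ were tuned precisely so that
\[
\mathbb{E}\, u_i u_i^{T} \;=\; \sum_{j} p_j\, w_j^{2}\, a_j a_j^{T} \;=\; \frac{AA^{T}}{\|W^{-1}\|_F^{2}} \;=:\; M,
\]
independently of $i$. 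Applying Lemma \ref{lem-p1} once per $u_i$, and using the independence of the $u_i$'s to slide the remaining expectations past the differential operators on the other variables, yields
\[
f_\emptyset(x) \;=\; \prod_{j=1}^{k}(1-\partial_{t_j})\,\det\Big[xI + \Big(\sum_{j=1}^{k} t_j\Big) M\Big]\bigg|_{t_1=\cdots=t_k=0}.
\]
Diagonalising $M$, whose nonzero eigenvalues are $\mu_i := \sigma_i(A)^{2}/\|W^{-1}\|_F^{2}$ for $i=1,\ldots,r := \mathrm{rank}(A)$, the inner determinant equals $x^{n-r}\prod_{i=1}^{r}(x+T\mu_i)$ where $T := t_1+\cdots+t_k$.

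It then remains to verify the polynomial identity
\[
(1-\partial_T)^{k}\Big[x^{n-r}\prod_{i=1}^{r}(x+T\mu_i)\Big]\bigg|_{T=0} \;=\; x^{n-k}\prod_{i=1}^{r}(1-\mu_i\partial_x)\,x^{k},
\]
where I have used that $\prod_{j}(1-\partial_{t_j})$ acts on any function of $T$ alone as $(1-\partial_T)^{k}$ by the chain rule. I would prove this by expanding both sides in terms of the elementary symmetric polynomials $e_\ell(\mu)$: the left-hand side equals $\sum_{\ell=0}^{\min(k,r)}(-1)^{\ell}\frac{k!}{(k-\ell)!}\,e_\ell(\mu)\,x^{n-\ell}$, using $(1-\partial_T)^{k}T^{\ell}|_{T=0} = (-1)^{\ell}k!/(k-\ell)!$ for $\ell\leq k$ and $0$ otherwise. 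On the right, distributing $\prod_{i}(1-\mu_i\partial_x) = \sum_\ell (-1)^{\ell}e_\ell(\mu)\partial_x^{\ell}$ over $x^{k}$ and then multiplying by $x^{n-k}$ produces exactly the same sum term by term.

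The technically more delicate step is the iteration of Lemma \ref{lem-p1}, where one must justify freely interleaving the expectations $\mathbb{E}_{u_i}$ with the differential operators $(1-\partial_{t_j})_{j\neq i}$; this follows from the independence of the $u_i$'s together with the fact that these operators act on distinct auxiliary variables. Once the symmetric-variable formulation is in place, the collapse to $(1-\partial_T)^{k}$ via the chain rule and the coefficient-matching at the end are routine, so the real content of the lemma lies in the choice of sampling probabilities $p_j$ that turn $\mathbb{E}\, u_i u_i^{T}$ into a clean scalar multiple of $AA^{T}$.
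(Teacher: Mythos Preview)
Your proof is correct and follows essentially the same route as the paper: iterate Lemma~\ref{lem-p1} to turn the expectation into a product of $(1-\partial_{t_j})$ acting on $\det[xI+(\sum_j t_j)M]$ with $M=AA^{T}/\|W^{-1}\|_F^{2}$, collapse the operators to $(1-\partial_T)^{k}$ using that the determinant depends only on $T=\sum_j t_j$, diagonalise $M$, and then match coefficients. Your explicit expansion of both sides in the elementary symmetric polynomials $e_\ell(\mu)$ is in fact more detailed than the paper, which at that point simply asserts that ``one can check the coefficients of polynomials that appear in the right hand sides \ldots\ are equal to each other.''
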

\begin{proof}
By introducing variables $z_1,\ldots,z_k$ and applying Lemma \ref{lem-p1} $k$ times, we obtain
$$
\begin{array}{ll}
f_{\emptyset}(x)&=\mathbb{E}\det\bigg[ xI-\sum\limits_{i=1}^k w_{s_i}^2a_{s_i}a_{s_i}^T\bigg]
\\[5mm]
&=\bigg(\prod\limits_{i=1}^k (1-\partial_{z_i})\bigg)\det \bigg[xI+\sum\limits_{i=1}^kz_i\mathbb{E}w_{s_i}^2a_{s_i}a_{s_i}^T\bigg]\bigg|_{z_1=\cdots=z_k=0}
\\[5mm]
&=\bigg(\prod\limits_{i=1}^k (1-\partial_{z_i})\bigg)\det \bigg[xI+\sum\limits_{i=1}^k\frac{z_i}{\|W^{-1}\|^2_F}AA^T\bigg]\bigg|_{z_1=\cdots=z_k=0}
\\[5mm]
&=\big(1-\partial_{z}\big)^k \det \bigg[xI+\frac{z}{\|W^{-1}\|^2_F}AA^T\bigg]\bigg|_{z=0},
\end{array}
$$
where the last equality based on the observation that the second determinant is a polynomial in $z:=z_{1}+\ldots+z_k$, and since for any differentiable function of $z$ we have $\partial_z=\partial_{z_i}$ for every $i=1,\ldots,k$, and the operator $1-\partial_z$ preserves the property of being a polynomial in $z$. Write $AA^T=U\Sigma U^T$, where $U$ is an orthogonal matrix and $\Sigma$ is a diagonal matrix whose diagonal equals $\big(\sigma_1(A)^2,\ldots,\sigma_n(A)^2\big)$. Then for every $x,z\in\mathbb{R}$ we have
$$
\begin{array}{ll}
\det \bigg[xI+\frac{z}{\|W^{-1}\|^2_F}AA^T\bigg]&=\det\bigg[U^T\bigg(xI+\frac{z}{\|W^{-1}\|^2_F}\Sigma\bigg)U\bigg]
\\[5mm]
&=\prod\limits_{i=1}^n\bigg(x+\frac{z}{\|W^{-1}\|^2_F}\sigma_i(A)^2\bigg)
\\[5mm]
&=x^{n-\mbox{rank}(A)}\prod\limits_{i=1}^{\mbox{rank}(A)}\bigg(x+\frac{z}{\|W^{-1}\|^2_F}\sigma_i(A)^2\bigg),
\end{array}
$$
where we used the fact that $\sigma_i(A)=0$ when $i\geq \mbox{rank(A)}$. Consequently,
$$
\begin{array}{ll}
f_{\emptyset}(x)&= x^{n-\mbox{rank}(A)} \big(1-\partial_{z}\big)^k \prod\limits_{i=1}^{\mbox{rank}(A)}\bigg(x+\frac{z}{\|W^{-1}\|^2_F}\sigma_i(A)^2\bigg)\bigg|_{z=0}
\\[5mm]
&=x^{n-k}\prod\limits_{i=1}^{\mbox{rank}(A)}\bigg(1-\frac{\sigma_i(A)^2}{\|W^{-1}\|^2_F}\partial_x\bigg) x^{k},
\end{array}
$$
where one can check the coefficients of polynomials that appear in the right hand sides of the first and the second equalities are equal to each other.
\end{proof}

Now we are ready to state the proof of Theorem \ref{main-theorem2}.
\begin{proof}[Proof of Theorem \ref{main-theorem2}]
Let
$$
g(x):=\prod\limits_{i=1}^{\mbox{rank}(A)}\bigg(1-\frac{\sigma_{i}(A)^2}{\|W^{-1}\|^2_F}\partial_x\bigg)x^k.
$$
For any $k\leq r \leq\mbox{rank}(A)$, we claim that
\begin{equation}
\label{proof-main2-1}
\lambda_{\min}(g)\geq\frac{(\sqrt{r}-\sqrt{k-1})^2}{\|W^{-1}\|_F^{2}}\cdot
\frac{r}{\sum_{i=1}^{r}\sigma_{i}(A)^{-2}}.
\end{equation}
From Lemma \ref{lemma-fix}, we know that
 $$\lambda_{k}(f_{\emptyset})=\lambda_{\min}(g),$$
which together with Lemmas \ref{root-interlacing} and \ref{if-def}, we know that there exists a sequence $s_1,\ldots,s_k\in[m]^k$ such that
\begin{equation}
\label{lambda-k}
\lambda_{k}(f_{s_1,\ldots,s_k})\geq\lambda_{\min}(g)\geq\frac{(\sqrt{r}-\sqrt{k-1})^2}{\|W^{-1}\|_F^{2}}\cdot
\frac{r}{\sum_{i=1}^{r}\sigma_{i}(A)^{-2}}.
\end{equation}
Recall that $f_{s_1,\ldots,s_k}$ is the characteristic polynomial of $\sum\limits_{i=1}^k w_{s_i}^2a_{s_i}a^T_{s_i}$ and since $k\leq r\leq\mbox{rank}(A)$, from \eqref{lambda-k} we know that $\lambda_{k}\big(\sum_{i=1}^k w_{s_i}^2a_{s_i}a^T_{s_i}\big)>0$. Thus the matrix $\sum\limits_{i=1}^k w_{s_i}^2a_{s_i}a^T_{s_i}$ must have rank $k$ and this implies that the sequence $s_1,\ldots,s_k$ consists of distinct elements. So, we conclude that there exists a subset $\cS\subseteq [m]$ of size $k$ for which
$$
\sigma_{\min}(A_{\cS}W_{\cS})\geq\frac{(\sqrt{r}-\sqrt{k-1})^2}{\|W^{-1}\|_F^{2}}\cdot
\frac{r}{\sum_{i=1}^{r}\sigma_{i}(A)^{-2}}.
$$
Now we remain to prove \eqref{proof-main2-1}. Let $p(x)=x^k$. For any $\alpha>0$, let $b=-k\alpha^{-1}$. Then
$$
\Phi_p(b)\leq\alpha.
$$
For any $i=1,\ldots,\mbox{rank}(A)$, set $$\delta_i=\frac{1}{\|W^{-1}\|^2_F\sigma_i(A)^{-2}+\alpha}.$$
Then applying Lemma \ref{lemma-lbf} $k$ times, we obtain
$$
\Phi_g\big(b+\sum\nolimits_{ i=1}^{\mbox{rank}(A)}\delta_i\big)\leq \Phi_p(b)\leq \alpha.
$$
Using the definition of the lower barrier function, we have
$$
\frac{1}{\lambda_{\min}(g)- \big(b+\sum\nolimits_{ i=1}^{\mbox{rank}(A)}\delta_i  \big)}\leq\Phi_g\big(b+\sum\nolimits_{ i=1}^{\mbox{rank}(A)}\delta_i\big)\leq \Phi_p(b)\leq \alpha.
$$
Hence
$$
\lambda_{\min}(g)\geq b+\sum\limits_{ i=1}^{\mbox{rank}(A)}\delta_i+\alpha^{-1}
=-k\alpha^{-1}+\sum\limits_{ i=1}^{\mbox{rank}(A)}\frac{1}{\|W^{-1}\|^2_F\sigma_i(A)^{-2}+\alpha}+\alpha^{-1},
$$
i.e.,
\begin{equation}
\label{proof-111}
\lambda_{\min}(g)\geq \frac{-(k-1)}{\alpha}+\sum\limits_{ i=1}^{\mbox{rank}(A)}\frac{1}{\|W^{-1}\|^2_F\sigma_i(A)^{-2}+\alpha}.
\end{equation}
Note that the function $x\to \frac{1}{\alpha+x}$ on $[0,+\infty)$ is convex as $\alpha>0$ and $r\leq \mbox{rank}(A)$, then by Lemma \ref{lemma-Jensen}, we have
$$
\begin{array}{ll}
\sum\limits_{ i=1}^{\mbox{rank}(A)}\frac{1}{\|W^{-1}\|^2_F\sigma_i(A)^{-2}+\alpha}&\geq \sum\limits_{ i=1}^{r}\frac{1}{\|W^{-1}\|^2_F\sigma_i(A)^{-2}+\alpha}
\\
&\geq \frac{r}{\alpha+\frac{1}{r}\sum\limits_{ i=1}^{r}\|W^{-1}\|^2_F\sigma_i(A)^{-2}   }.
\end{array}
$$
Thus
\begin{equation}
\label{proof-e1}
\lambda_{\min}(g)\geq\frac{-(k-1)}{\alpha}+\frac{r}{\alpha+\frac{1}{r}\sum\limits_{ i=1}^{r}\|W^{-1}\|^2_F\sigma_i(A)^{-2}   }.
\end{equation}
It is easily to check that
$$
\alpha_{\max}=\frac{\sqrt{k-1}}{\sqrt{r}-\sqrt{k-1}}\cdot \frac{\|W^{-1}\|^2_F}{r}\sum\limits_{ i=1}^{r}\sigma_i(A)^{-2} 
$$
 maximizes the right hand side of \eqref{proof-e1}. Then by simple calculation, we can obtain \eqref{proof-main2-1}.
\end{proof}
The proof of Theorem \ref{main-theorem} is very similar to that of Theorem \ref{main-theorem2}.
\begin{proof}[Proof of Theorem \ref{main-theorem}]
From the proof of Theorem \ref{main-theorem}, it suffices to prove that
\begin{equation}
\label{min-g}
\lambda_{\min}(g)\geq \bigg(1-\sqrt{\frac{k-1}{\mbox{srank}_4(A)}}\bigg)^2\frac{\|A\|^2_F}{\|W^{-1}\|^2_F}.
\end{equation}
If we write $k=(1-\epsilon)^2\mbox{srank}_4(A)+1$ where $\epsilon\in(0,1)$, then \eqref{min-g} is equivalent to
$$
\lambda_{\min}(g)\geq \epsilon^2\frac{\|A\|^2_F}{\|W^{-1}\|^2_F}.
$$
Note that the function $x\to \frac{1}{\|W^{-1}\|^2_F/\|A\|^{2}_F+\alpha x}$ on $[0,+\infty)$ is convex as $\alpha>0$ and then using Lemma \ref{lemma-Jensen}, we have
$$
\begin{array}{ll}
\sum\limits_{ i=1}^{\mbox{rank}(A)}\frac{1}{\|W^{-1}\|^2_F\sigma_i(A)^{-2}+\alpha}&= \sum\limits_{ i=1}^{\mbox{rank}(A)}\frac{\sigma_i(A)^{2}/\|A\|^{2}_F}{\|W^{-1}\|^2_F/\|A\|^{2}_F+\sigma_i(A)^{2}/\|A\|^{2}_F\alpha }
\\[5mm]
&\geq \frac{1}{\|W^{-1}\|^2_F/\|A\|^{2}_F+\sum\limits_{ i=1}^{\mbox{rank}(A)}\big(\sigma_i(A)^{2}/\|A\|^{2}_F\big)^2\alpha }
\\[5mm]
&=\frac{\mbox{srank}_4(A)}{\mbox{srank}_4(A)\|W^{-1}\|^2_F/\|A\|^{2}_F+\alpha }
\end{array}
$$
From \eqref{proof-111} we see that
\begin{equation}
\label{proof-main2-2}
\lambda_{\min}(g)
\geq\frac{-(k-1)}{\alpha}+\frac{\mbox{srank}_4(A)}{\mbox{srank}_4(A)\|W^{-1}\|^2_F/\|A\|^{2}_F+\alpha }.
\end{equation}
It is easily to check that
$$
\alpha_{\max}=\frac{\sqrt{k-1}}{\sqrt{\mbox{srank}_4(A)}-\sqrt{k-1}}\cdot \frac{\mbox{srank}_4(A)\|W^{-1}\|^2_F}{\|A\|^{2}_F} 
$$
 maximizes the right hand side of \eqref{proof-main2-2}. Then by simple calculation, we can obtain \eqref{min-g}.
\end{proof}

\section{A deterministic greedy selection algorithm}
\label{section-algorithm}
This section aims to present a deterministic greedy selection algorithm for the restricted invertibility, inspiring by the arguments in \cite{spielman17,xx2019}. 
Essentially, the algorithm produces the subset $\mathcal{S}$ by iteratively adding indices to it.  Suppose that at the $(j-1)$-th ($1\leq j\leq k$) iteration, we  already found a partial assignment $s_1,\ldots,s_{j-1}$ (it is empty when $j=1$). Then at the $j$-th iteration, the algorithm finds an index $s_{j}\in[m]\setminus \{s_1,\ldots,s_{j-1}\}$ such that $\lambda_{k}(f_{s_1,\ldots,s_{j}})\geq \lambda_{k}(f_{s_1,\ldots,s_{j-1}})$.

%
 By the definition of interlacing families, we know that the polynomial corresponding to a partial assignment $s_1,\ldots,s_j\in[m]^j$ is given by
$$
f_{s_1,\ldots,s_j}(x):=\mathbb{E}\det\bigg(xI-\sum\limits_{i=1}^ja_{s_i}a_{s_i}^T-\sum\limits_{i=j+1}^k{\bf r_ir_i}^T\bigg),
$$
where ${\bf r_{j+1}},\ldots,{\bf r_{k}}$ are i.i.d. and take the values $a_1,\ldots,a_m$ with probabilities $\frac{w_1^{-2}}{\|W^{-1}\|^2_F},\ldots,\frac{w_m^{-2}}{\|W^{-1}\|^2_F}$.
Now we want to find an index $s_{j+1}\in[m]$ such that $\lambda_k(f_{s_1,\ldots,s_{j+1}})\geq \lambda_k(f_{s_1,\ldots,s_j})$.
So one has to efficiently compute any partial assignment polynomial $f_{s_1,\ldots,s_j}$.

Let $C:=\sum\limits_{i=1}^ja_{s_i}a_{s_i}^T$ and $B:=\mathbb{E}{\bf r_ir_i}^T=AA^T/\|W^{-1}\|^2_F$ and applying Lemma \ref{lem-p1} repeatedly, we have
\begin{equation}
\label{fs}
\begin{array}{ll}
f_{s_1,\ldots,s_j}(x)&=\mathbb{E}\det\bigg[  xI-C-\sum\limits_{i=j+1}^k {\bf r_ir_i}^T\bigg]
\\
&=\bigg(\prod\limits_{i=j+1}^k (1-\partial_{z_i})\bigg)\det\bigg[  xI-C+\sum\limits_{i=j+1}^k\frac{z_i}{\|W^{-1}\|^2_F} AA^T\bigg]\bigg |_{z_{j+1}=\cdots=z_k=0}
\\
&=(1-\partial_{z})^{k-j}\det[  xI-C+zB] |_{z=0}.
\end{array}
\end{equation}
One can use the elementary symmetric function and fast polynomial interpolation to compute the bivariate polynomial $\det[xI-C+zB]$ in $O(n^{\theta+1})$ time, where $\theta\in (2,2.373)$ is the matrix multiplication complexity exponent \cite{keller85,Gall2014}.
Then applying the operator
$$
(1-\partial_z)^{k-j}=\sum\limits_{i=0}^{k-j}(-1)^{k-j-i}\left(
                                                          \begin{array}{c}
                                                            k-j \\
                                                            i \\
                                                          \end{array}
                                                        \right)\partial_z^i
$$
to each coefficient of $\det[xI-C+zB]$ and letting $z=0$,  which can be carried out in $O(n^2)$ time. Thus, we can compute $f_{s_1,\ldots,s_j}(x)$ in $O(n^{\theta+1})$ time.
One may refer to  \cite[Section 4.1]{spielman17} for more details.

Now, suppose that $
f_{s_1,\ldots,s_j}(x)$ has the following formula
$$
f_{s_1,\ldots,s_j}(x)=b^j_nx^n+b^j_{n-1}x^{n-1}+\cdots+b^j_0.
$$
 It follows from  \cite{root1994}  that  finding the roots of $f_{s_1,\ldots,s_j}(x)$ is equivalent to finding the eigenvalues of the upper Hessenberg matrix
$$
\label{H_j}
H_j=\left(
  \begin{array}{ccccc}
    -\frac{b^j_{n-1}}{b^j_n} & -\frac{b^j_{n-2}}{b^j_{n-1}} & -\frac{b^j_{n-3}}{b^j_{n-2}} & \cdots & -\frac{b^j_{1}}{b^j_0} \\
    1 & 0 & 0 & \cdots & 0 \\
    0 & 1 & 0 & \cdots & 0 \\
    \vdots & \vdots & \vdots & \vdots & \vdots \\
    0 & 0 & \cdots & 1 & 0 \\
  \end{array}
\right).
$$
Thus we can compute the $k$-th root of
$f_{s_1,\ldots,s_j}(x)$ by a number of methods, such as doing QR decomposition for $H_j$.

The deterministic greedy selection algorithm is stated as follows:
\begin{algorithm}[H]
\caption{A deterministic greedy selection algorithm}
\begin{algorithmic}[H]
\Require
$A\in\mathbb{R}^{n\times m}$ of rank $n$;  sampling parameter $k\in \{1,\ldots,\mbox{rank}(A)\}$.
\begin{enumerate}
\item[1:] Set $s_0=\emptyset$, and $j:=1$.
\item[2:]  For each $s \in[m]\setminus \{s_1,\ldots,s_{j-1}\}$, compute the polynomial $f_{s_1,\ldots,s_{j-1},s}(x)$.
\item[3:] Using the QR decomposition algorithm, for each $s \in[m]\setminus \{s_1,\ldots,s_{j-1}\}$, compute the $k$-th root of $f_{s_1,\ldots,s_{j-1},s}(x)$.
\item[4:] Find
$$
s_j=
\argmax{s\in[m]\setminus\{s_1,\ldots,s_{j-1}\}}\lambda_{k}\big(f_{s_1,\ldots,s_{j-1},s}(x)\big).
$$
\item[5:] If $j>k$, stop the algorithm. Otherwise, set $j=j+1$ and return to Step $3$.
\end{enumerate}

\Ensure
Subset $\cS=\{s_1,\ldots,s_k\}.$
\end{algorithmic}
\end{algorithm}

We have the following theorem for Algorithm $1$.

\begin{theorem}\label{th:alg}
For any $k\leq \mbox{rank}(A)$. Algorithm $1$ can output a subset $\cS=\{s_1,\ldots,s_k\}$ such that
\begin{equation}
\label{algbound}
\sigma_{\min}(A_{\cS}W_{\cS}
)^2\geq \lambda_{k}(f_{\emptyset}(x)).
\end{equation}
The running time complexity is $O\big(k(m-\frac{k}{2})n^{\theta+1}\big)$ where $\theta\in (2,2.373)$ is the matrix multiplication complexity exponent.
\end{theorem}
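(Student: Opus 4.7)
The plan has three parts: establish the correctness bound \eqref{algbound} by induction on the iteration counter $j$, argue that the indices selected by the greedy choice stay distinct (so Step 4's feasibility restriction does not bite), and then tally the per-iteration cost.

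\textbf{Correctness by induction.} I will prove by induction on $j$ that after the $j$-th iteration of Algorithm 1,
\[
\lambda_k(f_{s_1,\ldots,s_j}) \;\geq\; \lambda_k(f_{\emptyset}).
\]
The base case $j=0$ is trivial since $f_{s_0}=f_{\emptyset}$. For the inductive step, the definition of a partial assignment gives
\[
f_{s_1,\ldots,s_{j-1}} \;=\; \sum_{t\in[m]} f_{s_1,\ldots,s_{j-1},t},
\]
and by Lemma \ref{if-def} the summands share a common interlacing. Applying Lemma \ref{root-interlacing} to this sub-family produces some $t^{\ast}\in[m]$ with $\lambda_k(f_{s_1,\ldots,s_{j-1},t^{\ast}}) \geq \lambda_k(f_{s_1,\ldots,s_{j-1}})$. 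Once I verify (below) that $t^{\ast}\notin\{s_1,\ldots,s_{j-1}\}$, the arg-max in Step 4 is at least this value and the induction closes. Applying the identification $\sigma_{\min}(A_{\cS}W_{\cS})^2=\lambda_k(f_{s_1,\ldots,s_k})$ made just after \eqref{def-f} then yields \eqref{algbound} at $j=k$.

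\textbf{Distinctness of selected indices.} The main technical point is to rule out the possibility that the witness $t^{\ast}$ is a repeat of a previously chosen index. Suppose $t=s_i$ for some $i<j$. For every realization of $\mathbf{r}_{j+1},\ldots,\mathbf{r}_k$, the matrix inside the characteristic polynomial at $f_{s_1,\ldots,s_{j-1},t}$ has column space contained in $\mathrm{span}(a_{s_1},\ldots,a_{s_{j-1}},\mathbf{r}_{j+1},\ldots,\mathbf{r}_k)$, which has dimension at most $(j-1)+(k-j)=k-1$; the duplicated column contributes nothing new to the range. Hence that realization's characteristic polynomial is divisible by $x^{n-k+1}$, and averaging over $\mathbf{r}$'s preserves the divisibility, giving $\lambda_k(f_{s_1,\ldots,s_{j-1},t})=0$. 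Since by induction $\lambda_k(f_{s_1,\ldots,s_{j-1}}) \geq \lambda_k(f_{\emptyset})$, and in the interesting regime $\lambda_k(f_{\emptyset})>0$ (guaranteed by Theorem \ref{main-theorem2} whenever $k\leq\mathrm{rank}(A)$), no repeat can serve as $t^{\ast}$; the witness is always feasible for Step 4.

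\textbf{Running time.} At iteration $j$ there are $m-j+1$ candidates. For each candidate $s$, identity \eqref{fs} reduces computing $f_{s_1,\ldots,s_{j-1},s}(x)$ to (i) forming the bivariate polynomial $\det[xI-C+zB]$ in $O(n^{\theta+1})$ time by fast polynomial interpolation, and (ii) applying $(1-\partial_z)^{k-j}$ coefficient-wise and setting $z=0$, which costs $O(n^2)$. Extracting the $k$-th largest root via QR iteration on the Hessenberg matrix $H_j$ costs $O(n^3)\leq O(n^{\theta+1})$. Summing,
\[
\sum_{j=1}^{k}(m-j+1)\cdot O(n^{\theta+1}) \;=\; O\!\left(k\!\left(m-\tfrac{k-1}{2}\right)n^{\theta+1}\right) \;=\; O\!\left(k\!\left(m-\tfrac{k}{2}\right)n^{\theta+1}\right).
\]
The only nontrivial obstacle is the distinctness argument; the remainder is a direct unpacking of the interlacing-families machinery together with the polynomial-computation toolkit set up in the discussion preceding Algorithm 1.
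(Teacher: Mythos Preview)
Your argument is correct and follows the same skeleton as the paper's proof: an inductive chain $\lambda_k(f_{s_1,\ldots,s_j})\geq\lambda_k(f_{s_1,\ldots,s_{j-1}})$ driven by the interlacing-family structure, followed by the same per-iteration cost accounting summing to $O\big(k(m-\tfrac{k}{2})n^{\theta+1}\big)$. You are in fact more careful than the paper, which simply asserts the chain ``by Step~4 in Algorithm~1 and the definition of interlacing families'' without explaining why restricting the argmax to $[m]\setminus\{s_1,\ldots,s_{j-1}\}$ (rather than all of $[m]$, as the interlacing lemma a priori allows) loses nothing; your rank-deficiency argument showing $\lambda_k(f_{s_1,\ldots,s_{j-1},t})=0$ for any repeat $t$, combined with $\lambda_k(f_\emptyset)>0$ from Theorem~\ref{main-theorem2}, cleanly closes that gap.
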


By the proof of Theorems \ref{main-theorem2} and \ref{main-theorem}, it is easy to find that \eqref{algbound} implies the bound established in those theorems.

\begin{proof}[Proof of Theorem \ref{th:alg}]
By Step $4$ in Algorithm $1$ and the definition of interlacing families, we know that
$$
\lambda_{k}\big(f_{s_1,\ldots,s_{k}}(x)\big)\geq
\lambda_{k}\big(f_{s_1,\ldots,s_{k-1}}(x)\big)\geq
\cdots\geq \lambda_{k}\big(f_{s_1}(x)\big)\geq  \lambda_{k}\big(f_{\emptyset}(x)\big).
$$
We next establish the  running time complexity.

The main cost of Algorithm $1$ is  Steps $2$ and $3$. In Step $2$, at the $j$-th iteration, by \eqref{fs} we know that $f_{s_1,\ldots,s_{j-1},s}(x)$ can be computed in $O(n^{\theta+1})$ time for any fixed $s \in[m]\setminus \{s_1,\ldots,s_{j-1}\}$. Therefore, the total computational cost of Step $2$ is  $O((m-j+1)n^{\theta+1})$. As the computational cost of the QR decomposition is $O(n^3)$, so the time complexity for computing the $k$-th root of $f_{s_1,\ldots,s_{j-1},s}(x)$ over all $s \in[m]\setminus \{s_1,\ldots,s_{j-1}\}$ is $O((m-j+1)n^3)$. Thus, Algorithm $1$ produces the subset $\cS$ in $O\big(k(m-\frac{k}{2})n^{\theta+1}\big)$ time.
\end{proof}


\subsection*{Acknowledgements}
The author would like to thank Prof. Zhiqiang Xu at AMSS, Chinese Academy of Sciences for many useful discussions.


\end{document}